%% 
%% Copyright 2007, 2008, 2009 Elsevier Ltd
%% 
%% This file is part of the 'Elsarticle Bundle'.
%% ---------------------------------------------
%% 
%% It may be distributed under the conditions of the LaTeX Project Public
%% License, either version 1.2 of this license or (at your option) any
%% later version.  The latest version of this license is in
%%    http://www.latex-project.org/lppl.txt
%% and version 1.2 or later is part of all distributions of LaTeX
%% version 1999/12/01 or later.
%% 
%% The list of all files belonging to the 'Elsarticle Bundle' is
%% given in the file `manifest.txt'.
%% 

%% Template article for Elsevier's document class `elsarticle'
%% with numbered style bibliographic references
%% SP 2008/03/01

\documentclass[12pt]{amsart}

\usepackage{amssymb}
\textheight 22.7cm \textwidth  15.5 cm

\voffset -8mm%-.8in      %   Verschiebung nach oben
\hoffset -8mm   %-.2in      %   Verschiebung nach links
%\numberwithin{equation}{section}
\swapnumbers        %   bezieht sich auf Theoremnummern

%%%%%%%%%%%%%%%%%%%%%%%%%%%%%%%%%%%%%%%%%%%%%%%%

\newtheorem{theorem}{Theorem}
\newtheorem{lem}[theorem]{Lemma}
\newtheorem{definition}[theorem]{Definition}
\newtheorem{icomment}[theorem]{Remark}

\newtheorem{sled}[theorem]{Corollary}

%\numberwithin{equation}{section}
%\sloppy
%\relpenalty=900

%%%%%%%%%%%%%%%%%%%%%%%%%%%%%%%%%%%%%%%%%%%%%%%%%%%%%%%%%

\begin{document}

\begin{center}
{\bf ON 
THE COMPLETENESS OF ORBITS
OF A POMMIEZ OPERATOR  \\
IN WEIGHTED (LF)-SPACES OF ENTIRE FUNCTIONS}
\end{center} 

\bigskip

\centerline {\bf Olga A. Ivanova, Sergej N. Melikhov}

\bigskip

\begin{center}{\small Vorovich Institute of Mathematics, Mechanics and Computer Sciences, \\
  Southern Federal University\\
  Mil'chakova st. 8a, 
  344090 Rostov on Don,  Russia\\
neo$_{-}$ivolga@mail.ru\\

Vorovich Institute of Mathematics, Mechanics and Computer Sciences,\\
  Southern Federal University\\
  Mil'chakova st. 8a,  344090 Rostov on Don,  Russia\\
  and Southern Mathematical Institute 
  of Vladikavkaz Science Center \\
of the Russ. Acad. Sci.\\
  Markus st. 22,  362027 Vladikavkaz,
  Russia\\
melih@math.rsu.ru
}
\end{center}

\bigskip
\noindent
{\bf Keywords:} cyclic element, Pommiez operator, (LF)-space
of entire functions

\noindent
{\bf 2010 MSC:} Primary 47A16, Secondary 47B38, 46E10

\bigskip

\noindent
{\bf Abstract} 

\noindent
{\small
We describe cyclic vectors for a Pommiez operator on a weighted
(LF)-space $E$ of entire functions.
The full description is obtained where $E$ is the
Laplace transform of the strong dual of the space of all germs 
of holomorphic
functions on a convex locally closed set
in the complex plane.
}

%%%%%%%%%%%%%%%%%%%%%%%%%%%%%%%%%%%%%%%%%%%%%%%%%%%%%%%%%%%%%%%%%%%%%%%%%%%%%%%%%%%%%

\section*{Introduction}

%%%%%%%%%%%%%%%%%%%%%%%%%%%%%%%%%%%%%%%%%%%%%%%%%%%%%%%%%%%%%%%%%%%%%%%%%%%%%%%%%
In \cite{p1}--\cite{p3} Pommiez studied consecutive remainders
for the Taylor series 
of analytic functions on the disk zentered at zero.
In \cite{p4} generalized Newton series expansions for
analytic functions were investigated. In \cite{p1}--\cite{p4} 
the following difference operator was used:
$$
D_z(f)(t):=\begin{cases}
\frac{f(t)-f(z)}{t-z}, & \text{$t\ne z$,}\\
f'(z),& \text{$t=z$.}
\end{cases}
$$
After the papers \cite{p1}--\cite{p4}, it has become to refer to
the operators $D_z$ as Pommiez operators.
The operator $D_0$ is called also {\it the backward shift operator}.
In \cite{Ufa} an interpolating functional was studied that
plays an important role in the theory of exponential series and 
convolution equations for analytic functions. 
This functional is an abstract version of Leont'ev's interpolating function. 
It is defined with the help of
a Pommiez operator $D_{0,g_0}$ acting continuously and linearly
in a weighted (LF)-space $E$ of entire 
functions. The operator $D_{0,g_0}$ is defined by
$$
D_{0,g_0}(f)(t):=\begin{cases}
\frac{f(t)-g_0(t)f(0)}{t}, & \text{$t\ne 0$,}\\
f'(0)-g_0'(0)f(0),& \text{$t=0.$}
\end{cases}
$$
Here $g_0$ is a fixed function in $E$ such that $g_0(0)=1$. 
If $g_0$ is identically equal to $1$, the operator $D_{0,g_0}$ 
coincides with the operator $D_0$.
If $E$ does not contain the function $g_0\equiv 1$, then
the standard operator $D_0$ may fail to act in $E$. 
Hence the passage to a function $g_0\in E$  with
$g_0(0)=1$ proves to be quite natural.
Tkachenko \cite{TKACH1}, \cite{TKACH2} 
investigated the operator of generalized integration, which is 
the dual map of $D_{0,g_0}$ in the case $g_0=e^P$ 
where $P$ is a polynomial.
This operator $D_{0,g_0}$ acts in a 
weighted (LB)-space
of entire functions whose growth is determined with
the help of a 
$\rho$-trigonometric convex function ($\rho>0$).

In the present article we study cyclic vectors for $D_{0,g_0}$ in $E$. 
A vector $x$ in a locally convex space $F$ is called cyclic
for a continuous linear operator $A$ in $F$
provided the linear span of the orbit $\{A^n(x)\,:\,n\ge 0\}$ is dense
in $F$. Earlier similar results were obtained 
in following situations.
Khaplanov \cite{Haplanov} found a sufficient cyclicity condition 
for a function for the operator $D_0$
in the space of all analytic functions
on the disk $|z|<R$. Kaz'min \cite{Kazmin} proved cyclicity 
criteria for a function for $D_0$ 
in the Fr\'echet space $H(G)$ of all 
analytic functions on a 
simply connected domain $G$ in $\mathbb C$ containing zero.
N.~Linchuk \cite{Linchuk_mama} investigated cyclic vectors 
for the operator $D_0$ 
in $H(G)$ for a finitely connected domain $G$ in $\mathbb C$. 
Yu.~Linchuk
\cite{Linchuk_sin_doch} studied cyclic vectors of some "one-dimensional
variation" \, of $D_0$ in $H(G)$. This operator 
is the operator
$D_{0,g_0}$ for some function $g_0\in H(G)$ (see \cite{IM_UFA15}).
In \cite{IM_UFA15} cyclic vectors were described for $D_{0,g_0}$ in $H(G)$ 
without additional assumptions of the paper 
\cite{Linchuk_sin_doch}.
Douglas, Shapiro and Shields \cite{DSS}
investigated cyclic vectors and invariant subspaces
for the  backward shift operator $D_0$ in the Hardy space $H^2$
on the unit disk. Cyclic elements for the generalized backward shift operator 
were described also by
Godefroy and Shapiro \cite{GS}.

In \S~1 we introduce the space $E$
and operators of our interest. In \S~2 we prove 
abstract sufficient and necessary conditions for the completeness  
of the system
$\{D_{0,g_0}^n(f):n\ge 0\}$ in $E$. In \S~3 we apply obtained 
results to
the space $E$, which is  the realization (with the help of the 
Laplace transform) 
of the strong dual
of the space $H(Q)$ of all germs of analytic functions on a convex 
locally closed set 
$Q\subset\mathbb C$. Main result here is Theorem 19, in which
the full description of cyclic vectors for $D_{0,g_0}$ in $E$ is obtained.
In the proof of Theorem 19 we use essentially Leont'ev's
interpolating function.
As a consequence we characterize 
proper closed $D_{0,g_0}$-invariant subspaces
of such space $E$ in the case that $g_0$ has no zeros
and proper closed ideals in the algebra $(H(Q), \ast)$
where $\ast$ is the Duhamel product in $H(Q)$.

\section{An auxiliary information}

%%%%%%%%%%%%%%%%%%%%%%%%%%%%%%%%%%%%%%%%%%%%%%%%%%%%%%%%%%%%%%%%%%%%%%%%%%%%

In \cite{Ufa}, \cite{AA} a Pommiez operator associated with a 
function $g_0$
in a weighted
$(LF)$-space of entire functions was investigated. We mention a
necessary information from \cite{Ufa}, \cite{AA}. For a continuous
function $v:\mathbb C\to \mathbb R$, for a function $f:\mathbb C
\to \mathbb C$ we put
$$
p_v(f):=\sup\limits_{z\in \mathbb C}\frac{|f(z)|}{\exp v(z)}.
$$

Let $v_{n,k}:\mathbb C\to \mathbb R$ be continuous functions 
such that
$$
v_{n,k+1}\le v_{n,k}\le v_{n+1,k}, \ n,k\in\mathbb N.
$$
Define weighted Fr\'echet spaces
$$
E_n:=\{f\in H(\mathbb C)\,:\, p_{v_{n,k}}(f)<+\infty \ 
\forall k \in \mathbb N\},  \ n\in \mathbb N.
$$
Here $H(\mathbb C)$ is  the space of all entire functions on
$\mathbb C$ with the compact open topology.
Note that $E_n$ is embedded continuously in $E_{n+1}$ for each 
$n\in \mathbb N$.
Put $E:=\bigcup\limits_{n\in\mathbb N} E_n$ and we 
endow $E$ with the
topology of the inductive limit of the sequence of Fr\'echet spaces 
$E_n$, $n\in\mathbb N$, with respect to embeddings $E_n$ in
$E$ (see \cite[Ch.~III, \S~24]{MEIVOGT}): \, $E:=\mathop{\rm ind}\limits_{n
\rightarrow} E_n$.

In the sequel, we assume that the double sequence
$(v_{n,k})_{n,k\in\mathbb N}$ satisfies the following condition:

$$
\forall n  \ \exists m \ \forall k  \ \exists s \ \exists C\ge 0:
$$
\begin{equation}
\sup\limits_{|t-z|\le 1}v_{n,s}(t)+\ln(1+|z|)\le 
\inf\limits_{|t-z|\le 1} v_{m,k}(t)+C, \ z\in \mathbb C.
\end{equation}
Then the space $E$ is invariant under differentiation and translation,
and for each $n\in \mathbb N$ there exists $m\in
\mathbb N$ such that every bounded set in $E_n$ is relatively
compact in $E_m$ \cite[Remark 1]{Ufa}. Moreover, $E$ is
invariant under multiplication by the independent
variable.

%%%%%%%%%%%%%%%%%%%%%%%%%%%%%%%%%%%%%%%%%%%%%%%%%%%%%%%%%%%%%%%%%%%%%%%%%%%%%%%%%%%%%

We assume that $E$ contains a function that is not identically zero. 
Then there
exists a function $g_0\in E$ such that $g_0(0)=1$. For arbitrary
function $g_0\in E$ with $g_0(0)=1$ we define the Pommiez operator 
associated with $g_0$ by
$$
D_{0,g_0}(f)(t):=\left\{
\begin{array}{cc}
\frac{f(t)-g_0(t)f(0)}{t}, & t\ne 0,\\
f'(0)-g_0'(0)f(0), & t=0.
\end{array}
\right.
$$
The operator $D_{0,g_0}$ maps linearly and, by the closed graph
theorem, continuously $E$ into itself.

For a locally convex space $F$ we
denote by $\mathcal L(F)$ the space of all continuous 
linear operators from $F$ into itself.

Following \cite{Binderman}, \cite{Dimovskiy}, we introduce
{\it the
shift operator for the Pommiez operator} $D_{0,g_0}$ 
$$
T_z(f)(t):=\left\{
\begin{array}{cc}
\frac{tf(t)g_0(z)-zf(z)g_0(t)}{t-z}, & \, t\ne z,\\
zg_0(z)f'(z)-zf(z)g'_0(z)+f(z)g_0(z), & t=z,
\end{array}
\right.
$$
$f\in E$. By \cite{AA} $T_z\in \mathcal L(E)$ for all $z\in
\mathbb C$.

\medskip
If $g_0=e^P$, where $P$ is a polynomial, Tkachenko \cite{TKACH2} used 
the operator $T_z$ for the description of operators commuting
with an operator $J_P$ of generalized integration. 
The operator $J_P$
is the dual map of $D_{0,e^P}$ that acts in a weighted (LB)-space
of entire functions. The growth of functions of this
space is determined with the help a
$\rho$-trigonometric convex function ($\rho>0$).

We mention some auxiliary statements \cite[Lemma 7, Lemma 11, Lemma 12]{AA} 
(here Lemma 1, Lemma 3, Lemma 4, respectively). 
For $z\in\mathbb C$ the symbol $\delta_z$ denotes the delta-function
$\delta_z(f):=f(z)$.

\begin{lem} \label{l:179_2}
For each integer $n\ge 0$
%\begin{equation}\label{eq:2409_825}
$$
D_{0,g_0}^n(f)(z)=\varphi_n(T_z(f)),  \ \   f\in E, \ z\in \mathbb C,
$$
%\end{equation}
where $\varphi_0=\delta_0$ and for each $n\in \mathbb N$ there
exist $c_{k,n}\in \mathbb C$, $0\le k\le n-1$, such that
$$
\varphi_n(f)=\frac{1}{n!}f^{(n)}(0)+\sum\limits_{k=0}^{n-1}c_{k,n}f^{(k)}(0), \, 
f\in E.
$$
\end{lem}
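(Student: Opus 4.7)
The plan is to verify that the functionals $\varphi_n(h):=D_{0,g_0}^n(h)(0)$, $h\in E$, satisfy both requirements of the lemma: the evaluation formula $\varphi_n(T_z f)=D_{0,g_0}^n(f)(z)$ and the claimed Taylor-coefficient representation. I proceed by induction on $n$ after extracting one key algebraic identity that does the work.

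For $n=0$, plugging $t=0$ into the $t\ne z$ branch of the definition of $T_z$ and using $g_0(0)=1$ gives $T_z(f)(0)=f(z)$, so $\varphi_0=\delta_0$ works. The crucial step is the commutation relation
$$D_{0,g_0}(T_z f)=T_z(D_{0,g_0} f),\qquad f\in E,\ z\in\mathbb C.$$
To prove it I use that $t\,D_{0,g_0}(f)(t)=f(t)-g_0(t)f(0)$ as entire functions (the singularity at $0$ is removable). Substituting this into the definition of $T_z(D_{0,g_0}f)(t)$ and, independently, simplifying $\bigl(T_z(f)(t)-g_0(t)T_z(f)(0)\bigr)/t$ straight from the definition of $T_z$, both expressions collapse to $\frac{f(t)g_0(z)-f(z)g_0(t)}{t-z}$, which settles the identity. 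Iterating gives $D_{0,g_0}^n\circ T_z=T_z\circ D_{0,g_0}^n$, and evaluating at $t=0$ yields
$$\varphi_n(T_z f)=D_{0,g_0}^n(T_z f)(0)=T_z(D_{0,g_0}^n f)(0)=D_{0,g_0}^n(f)(z),$$
the last equality from the base case applied to $D_{0,g_0}^n f$.

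The representation of $\varphi_n$ comes from comparing Taylor coefficients. Since $g_0(0)=1$, direct expansion of $h(t)-g_0(t)h(0)$ about $0$ gives
$(D_{0,g_0}h)^{(j)}(0)=\frac{1}{j+1}\bigl[h^{(j+1)}(0)-g_0^{(j+1)}(0)h(0)\bigr]$ for every $j\ge 0$.
Inducting: if $\varphi_n(h)=\frac{1}{n!}h^{(n)}(0)+\sum_{k=0}^{n-1}c_{k,n}h^{(k)}(0)$, then $\varphi_{n+1}(h)=\varphi_n(D_{0,g_0}h)$, and substituting the formula above produces a leading term $\frac{1}{n!}\cdot\frac{1}{n+1}h^{(n+1)}(0)=\frac{1}{(n+1)!}h^{(n+1)}(0)$, while all remaining contributions involve only $h^{(k)}(0)$ for $k\le n$ with new constants built from the $c_{k,n}$ and the derivatives $g_0^{(j)}(0)$.

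The main obstacle is the commutation identity; the calculation is elementary but needs careful handling of the removable singularities at $t=0$ and $t=z$. Once that is done, the rest is routine bookkeeping.
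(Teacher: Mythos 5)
Your proof is correct. Note that the paper does not prove this lemma itself --- it is quoted from \cite{AA} (Lemma 7 there) --- so there is no in-text argument to compare against; but your construction $\varphi_n(h):=D_{0,g_0}^n(h)(0)$ together with the intertwining identity $D_{0,g_0}\circ T_z=T_z\circ D_{0,g_0}$ (both compositions in fact equal $\widetilde T_z$, which is consistent with Lemma 14 and equation (2) of the paper) is the natural route, and your verification of the base case $T_z(f)(0)=f(z)$, of the commutation relation, and of the triangular form of the Taylor-coefficient representation by induction are all sound.
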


\medskip
For a locally convex space $F$ by $F'$ we denote
the topological dual of $F$.
By $\tau:=\tau(E',E)$ we denote the Mackey topology in $E'$, 
that is, the
topology of uniform convergence on the family of all absolutely
convex $\sigma(E,E')$-compact subsets of $E$ 
\cite[Ch.~III, \S~3]{MEIVOGT}. 
Here $\sigma(E,E')$ is the weak topology in $E$
which is defined by the natural duality between $E$ and $E'$.

\begin{icomment}{\rm \label{i:28} (a) Since $E$ is embedded continuously 
in $H(\mathbb C)$
for each $k\ge 0$ and $t\in \mathbb C$ linear
functionals $f\mapsto f^{(k)}(t)$ are continuous on $E$. 
Hence all functionals $\varphi_n$, $n\ge 0$,  in Lemma 1 are
continuous on $E$.

\noindent
(b) If $f\in E$ and $\varphi_n(f)=0$ for all $n\ge 0$,
then $f^{(n)}(0)=0$ for all $n\ge 0$. Hence the function $f$ 
vanishes on $\mathbb C$. Therefore the system
$\{\varphi_n:\,n\ge 0\}$ is complete in $(E',\tau)$.
}
\end{icomment}

For $f\in E$ we put
$$
\widetilde f(t,z):=T_z(f)(t), \  t,z\in \mathbb C.
$$
The function $\tilde f$ is entire on $\mathbb C^2$.

\medskip
\begin{lem} If a net $\Psi_\mu\in E'$, $\mu\in\Delta$, 
converges to $\psi\in E'$ in $(E',\tau)$, then
for each function $f\in E$
$$
\lim\limits_{\mu\in\Delta}(\Psi_\mu)_z(\widetilde f(\cdot,z))= 
\psi_z(\widetilde f(\cdot,z)) \,\, \mbox{ and } \,\,
\lim\limits_{\mu\in\Delta}(\Psi_\mu)_t(\widetilde f(t,\cdot))= 
\psi_t(\widetilde f(t,\cdot)),
$$
where the limits are taken in $E$.
\end{lem}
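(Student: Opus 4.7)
The plan is to exploit the symmetry $\widetilde f(t, z) = \widetilde f(z, t)$, which is immediate from the formula for $T_z(f)(t)$: swapping $t$ and $z$ leaves the expression $\frac{tf(t)g_0(z) - zf(z)g_0(t)}{t-z}$ unchanged. Both assertions of the lemma therefore reduce to the single statement that the functions $F_\Psi(z) := \Psi(T_z(f))$ belong to $E$ and satisfy $F_{\Psi_\mu} \to F_\psi$ in $E$ whenever $\Psi_\mu \to \psi$ in $(E', \tau)$.

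The technical core is the uniform bound
$$
p_{v_{m, k}}\bigl(T_z(f)\bigr) \le C_k \exp v_{m, k}(z), \qquad z \in \mathbb C,
$$
with the \emph{same} weight $v_{m, k}$ on both sides, valid for every $k$, where $f, g_0 \in E_n$ and $m = m(n)$ is the index supplied by condition~(1). I would derive this by splitting the analysis into $|t - z| \ge 1$ (direct from the explicit formula for $T_z(f)(t)$) and $|t - z| < 1$ (Cauchy's integral formula over a circle of radius~$2$ around $t$, which reduces the latter case to the former), and by using condition~(1) to absorb the factor $|z|$ from the numerator into a shift of weights. Setting $h_z := T_z(f)/\exp v_{m, k}(z)$, the estimate gives $p_{v_{m, k}}(h_z) \le C_k$ uniformly in $z$; combining this with the monotonicity of the seminorms $p_{v_{m, k'}}$ in $k'$ and the analogous estimate at every level $k'$ shows that $B := \{h_z : z \in \mathbb C\}$ is bounded in every seminorm of $E_m$, hence bounded in $E_m$.

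By condition~(1), the bounded set $B$ is relatively compact in some Fr\'echet step $E_{m'} \hookrightarrow E$, so its closed absolutely convex hull $\overline{\Gamma}(B)$ is compact in $E_{m'}$ and therefore $\sigma(E, E')$-compact. Boundedness of $B$, combined with a continuity estimate $|\Psi(h)| \le D_m\, p_{v_{m, k_0}}(h)$ of $\Psi$ on $E_m$, places $F_\Psi$ in $E_m \subset E$; and Mackey convergence $\Psi_\mu \to \psi$ gives uniform convergence on $\overline{\Gamma}(B)$, so that
$$
p_{v_{m, k}}\bigl(F_{\Psi_\mu} - F_\psi\bigr) = \sup_{z \in \mathbb C} \bigl|(\Psi_\mu - \psi)(h_z)\bigr| \longrightarrow 0.
$$
Since $k$ is arbitrary, this is convergence in $E_m$, hence in $E$. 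The main obstacle I expect is securing the central estimate with matching weights on both sides, as the factor $|z|$ and the regularisation near $t = z$ both consume index budget from condition~(1); once this is done, the passage from Mackey convergence to convergence in $E$ is standard.
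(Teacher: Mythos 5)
The paper itself contains no proof of this lemma: it is quoted from \cite{AA} (Lemma~11 there), so there is no internal argument to compare against. Your proposal is a correct, self-contained proof, and its architecture --- reduce both claims to a single one via the symmetry $\widetilde f(t,z)=\widetilde f(z,t)$ of $\frac{tf(t)g_0(z)-zf(z)g_0(t)}{t-z}$; establish the growth estimate for $T_z(f)$ by splitting into $|t-z|\ge 1$ (direct estimate plus condition~(1) to absorb the factors $|t|$, $|z|$) and $|t-z|<1$ (maximum principle); deduce that $B_k=\{T_z(f)e^{-v_{m,k}(z)}:z\in\mathbb C\}$ is bounded in a step $E_m$; invoke the consequence of condition~(1) that bounded sets of $E_m$ are relatively compact in some $E_{m'}$, so that $\overline{\Gamma}(B_k)$ is an absolutely convex $\sigma(E,E')$-compact set on which Mackey convergence is uniform --- is the natural route and yields exactly $p_{v_{m,k}}(F_{\Psi_\mu}-F_\psi)=\sup_z|(\Psi_\mu-\psi)(h_z)|\to 0$ for every $k$.

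Three details deserve attention. First, the central estimate as you state it (the same index $k$ on both sides) is only the diagonal case and, for a single fixed $k$, would not by itself give boundedness of $B_k$ in $E_m$ nor membership $F_\Psi\in E_m$ (the seminorms $p_{v_{m,k'}}$ increase with $k'$); what saves you is precisely the remark you make next, namely that the diagonal estimate holds at \emph{every} level $k'$ and that $v_{m,k'}\le v_{m,k}$ for $k'\ge k$ --- keep that step explicit. Second, condition~(1) compares weights only over balls of radius $1$, so the Cauchy/maximum-principle step should be run on the circle $|t'-z|=1$ (or condition~(1) must be iterated if you insist on radius $2$); this also means the final index $m$ comes from a two-fold application of~(1), not one. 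Third, membership of $F_\Psi(z)=\Psi(T_z(f))$ in $E_m$ requires not only the growth bound but also that $F_\Psi$ be entire; a sentence on the weak holomorphy of $z\mapsto T_z(f)$ (convergence of difference quotients in $E_m$, which follows from the same estimates applied to $\partial\widetilde f/\partial z$) would close this small gap. None of these affects the validity of the overall argument.
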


\begin{lem} Let $\varphi_n$, $n\ge 0$, be the functionals defined by Lemma 1.
For each entire function $h$ on $\mathbb C^2$, for all integers
$j,k\ge 0$
$$
(\varphi_j)_z((\varphi_k)_t(h(t,z)))=(\varphi_k)_t((\varphi_j)_z(h(t,z))).
$$
\end{lem}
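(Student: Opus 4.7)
The plan is to reduce the identity to the classical commutativity of mixed partial derivatives of a holomorphic function on $\mathbb C^2$. By Lemma~1, each $\varphi_n$ is a finite linear combination of the derivative-at-zero functionals $f\mapsto f^{(l)}(0)$, $0\le l\le n$, and these extend by the same formula to all of $H(\mathbb C)$. Consequently, for an arbitrary entire $h$ on $\mathbb C^2$, both iterated expressions $(\varphi_k)_t(h(t,z))$ and $(\varphi_j)_z(h(t,z))$ are well defined and yield entire functions of the remaining variable; the latter follows because the partial derivatives $\partial_t^l h(0,z)$ of an entire function on $\mathbb C^2$ are themselves entire in $z$ (this can be read off its convergent bivariate Taylor series about the origin).

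First I would write, using Lemma~1, $\varphi_n(f)=\sum_{l=0}^{n}a_{l,n}\,f^{(l)}(0)$, with $a_{n,n}=1/n!$ and $a_{l,n}=c_{l,n}$ for $l<n$. Applying this formula twice expresses each side of the claimed identity as a finite double sum of mixed partial derivatives of $h$ at the origin:
\begin{equation*}
(\varphi_j)_z\bigl((\varphi_k)_t(h(t,z))\bigr)=\sum_{m=0}^{j}\sum_{l=0}^{k}a_{m,j}\,a_{l,k}\,\frac{\partial^{m+l}h}{\partial z^m\,\partial t^l}(0,0),
\end{equation*}
and the symmetric formula will hold with the roles of $t,z$ and $j,k$ interchanged on the other side.

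The last step would be to invoke Schwarz's theorem (or, equivalently, to differentiate term-by-term the convergent Taylor series of $h$ about $(0,0)$) in order to conclude that $\partial^{m+l}h/(\partial z^m\partial t^l)=\partial^{m+l}h/(\partial t^l\partial z^m)$, so that the two double sums coincide. I expect the only genuine point requiring verification to be the preliminary claim that one may legitimately iterate the functionals; as noted above this is immediate from the holomorphy of $h$, so no real obstacle remains and the rest of the argument is purely formal.
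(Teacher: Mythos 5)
Your argument is correct: reducing each side via Lemma~1 to a finite double sum of mixed partials of $h$ at the origin (with the functionals extended to all of $H(\mathbb C)$ by their explicit formulas) and invoking the symmetry of mixed partial derivatives of an entire function on $\mathbb C^2$ is exactly the natural proof. The paper itself states this lemma without proof, quoting it from \cite{AA} (Lemma 12 there), and your reasoning is the standard argument one would expect that reference to contain; no gaps.
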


%%%%%%%%%%%%%%%%%%%%%%%%%%%%%%%%%%%%%%%%%%%%%%%%%%%%%%%%%%%%%%%%%%%%%%%%%%%%%%%%%%%%%%%%%%%%%%%%%%%

\medskip

Let $\mathcal K(D_{0,g_0})$ be the set of all operators $B\in \mathcal L(E)$ such that
$BD_{0,g_0}=D_{0,g_0}B$ on $E$. By \cite[Theorem 15]{AA} the following description of 
$\mathcal K(D_{0,g_0})$ holds.

\begin{theorem} (i) If $B\in\mathcal K(D_{0,g_0})$, there exists a unique $l\in E'$ such that
$B(f)(z)=l(T_z(f))$ for all $f\in E$ and $z\in\mathbb C$.

\noindent
(ii) For each $l\in E'$ the operator $B(f)(z)=l(T_z(f))$,
$f\in E$, $z\in\mathbb C$, belongs to $\mathcal K(D_{0,g_0})$.
\end{theorem}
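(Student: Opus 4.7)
My plan is to reduce everything to a Fubini-type identity between an arbitrary $l\in E'$ and the functionals $\varphi_n$. Concretely, I first prove that for every $l\in E'$, every $f\in E$, and every $n\geq 0$,
$$(\varphi_n)_z\bigl(l_t(\widetilde f(t,z))\bigr)=l_t\bigl((\varphi_n)_z(\widetilde f(t,z))\bigr)=l(D_{0,g_0}^n(f)).$$
The second equality combines the manifest symmetry $\widetilde f(t,z)=\widetilde f(z,t)$ (read off the formula for $T_z$) with Lemma~1, which gives $(\varphi_n)_z\widetilde f(t,z)=D_{0,g_0}^n(f)(t)$. For the first equality I approximate $l$ in the Mackey topology $\tau$ by finite linear combinations $\Psi_\mu=\sum_k c_{\mu,k}\varphi_k$; such a net exists by Remark~\ref{i:28}(b). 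Lemma~3 yields the interchange for each $\Psi_\mu$, and passage to the limit is guaranteed on the left by Lemma~2 (convergence in $E$ of $(\Psi_\mu)_t\widetilde f(t,\cdot)$, followed by the continuous functional $\varphi_n$) and on the right by weak convergence $\Psi_\mu\to l$ applied to the fixed element $D_{0,g_0}^n(f)\in E$.

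For (i), since $g_0(0)=1$ a direct computation shows $T_0=\mathrm{id}$, so any representation $B(f)(z)=l(T_z(f))$ forces $l=\delta_0\circ B$, settling uniqueness. For existence I set $l:=\delta_0\circ B\in E'$ and show that the entire function $g(z):=B(f)(z)-l(T_z(f))$ vanishes by verifying $\varphi_n(g)=0$ for every $n$, which suffices by Remark~\ref{i:28}(b). Lemma~1 together with $T_0=\mathrm{id}$ gives the identity $\varphi_n=\delta_0\circ D_{0,g_0}^n$; the commutation $B\in\mathcal{K}(D_{0,g_0})$ then yields $\varphi_n(B(f))=\delta_0(B(D_{0,g_0}^n(f)))=l(D_{0,g_0}^n(f))$, while the key step gives $(\varphi_n)_z(l_t\widetilde f(t,z))=l(D_{0,g_0}^n(f))$; subtracting completes the argument.

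For (ii), set $B(f)(z):=l(T_z(f))$. That $B(f)$ is entire in $z$ and that $B\in\mathcal L(E)$ follow from $\widetilde f\in H(\mathbb C^2)$, the continuity of $l$ on some step $E_{n_0}$, and the uniform-in-$z$ seminorm estimate on $T_z(f)$ supplied by condition~(1) together with the closed graph theorem; this is routine bookkeeping parallel to the one used in \cite{AA} for $T_z$ itself. For the commutation $BD_{0,g_0}=D_{0,g_0}B$ I again compare $\varphi_n$-values: using $\varphi_n\circ D_{0,g_0}=\varphi_{n+1}$ (immediate from Lemma~1 and $T_0=\mathrm{id}$) together with the key step, applied to $B(f)$ and to $D_{0,g_0}(f)$ respectively, each side evaluates to $l(D_{0,g_0}^{n+1}(f))$, and Remark~\ref{i:28}(b) then forces the equality of the two entire functions.

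The main obstacle is the key step: Lemma~3 supplies the interchange only for the particular pair $\varphi_j,\varphi_k$, so the crux is to extend it to arbitrary $l\in E'$ by combining the $\tau$-density of $\mathrm{span}\{\varphi_n\}$ with the $E$-convergence provided by Lemma~2. Once this is secured, both parts of the theorem reduce to short algebraic manipulations on $\varphi_n$-coefficients.
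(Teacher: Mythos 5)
The paper does not actually prove this statement: Theorem 5 is imported verbatim from \cite{AA} (Theorem 15 there), so there is no in-paper argument to compare yours against. Judged on its own, your proof is correct and self-contained given the quoted auxiliary results, and it is assembled from exactly the ingredients the authors themselves deploy later when proving Theorem 9: the symmetry $\widetilde f(t,z)=\widetilde f(z,t)$ of $T_z(f)(t)$, the identity $T_0=\mathrm{id}$ (which gives both $\varphi_n=\delta_0\circ D_{0,g_0}^n$ and the uniqueness of $l$), the $\tau$-density of ${\rm span}\{\varphi_n\,:\,n\ge 0\}$ from Remark 2(b), the convergence statement for nets in $(E',\tau)$, and the Fubini-type interchange for pairs $\varphi_j,\varphi_k$. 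Your ``key step'' $(\varphi_n)_z\bigl(l_t(\widetilde f(t,z))\bigr)=l(D_{0,g_0}^n(f))$ is precisely the chain of equalities displayed in the proof of $(ii)\Rightarrow(i)$ of Theorem 9, so the reduction of both halves of the commutant theorem to a comparison of $\varphi_n$-values is sound, and the passage from $\varphi_n(g)=0$ for all $n$ to $g\equiv 0$ is justified by the triangular form of the $\varphi_n$ exactly as in Remark 2(b).

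Two small points. First, your numbering is off by one: in this paper the convergence statement you call ``Lemma 2'' is Lemma 3, and the interchange you call ``Lemma 3'' is Lemma 4 (item 2 of the shared counter is a Remark). Second, the only step you leave as ``routine bookkeeping'' --- that $B(f)(z)=l(T_z(f))$ defines a map of $E$ into itself in part (ii) --- can be closed without any new seminorm estimates: Lemma 3 already asserts that $(\Psi_\mu)_t(\widetilde f(t,\cdot))\to l_t(\widetilde f(t,\cdot))$ \emph{in} $E$, so $B(f)\in E$ comes for free, and continuity then follows from the closed graph theorem for (LF)-spaces just as the paper argues for $D_{0,g_0}$ itself (the graph is closed because convergence in $E$ implies pointwise convergence and each $T_z$ is continuous). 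With that observation your argument is complete.
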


%%%%%%%%%%%%%%%%%%%%%%%%%%%%%%%%%%%%%%%%%%%%%%%%%%%%%%%%%%%%%%%%%%%%%%%%%%%%%%%%%%%%%%%%%%%%%%%%

\section{Abstract criteria of the cyclicity}

In this section we describe cyclic vectors for
$D_{0,g_0}$ in $E$. %Let $\N_0:=\N\cup \{0\}$.

%ѕриведем соответствующее определение из \cite{}.

\begin{definition}
Let $F$ be a locally convex space. An element $x\in F$ is called
{\it a cyclic} 
vector for an operator
$A\in \mathcal L(F)$ if the system $\{A^n(x): \,n\ge 0\}$ is
complete in $F$, that is, the linear span of
$\{A^n(x): \,n\ge 0\}$ is dense in $F$.
\end{definition}

For a locally convex space $F$  and an operator $A\in \mathcal
L(F)$ the symbol ${\rm Cycl}_F(A)$ denotes the set of all cyclic
vectors of $A$ in $F$. Further $E$ is the space as in \S~1. We
will write ${\rm Cycl}(A)$ instead of ${\rm Cycl}_E(A)$.

\medskip
The following assertion is obvious.

\begin{lem}\label{l:1411_1430} Let $F$ be a locally convex space
and $A\in \mathcal L(F)$ be surjective.
Then the set ${\rm Cycl}_F(A)$ is $A$-invariant, that is,
$$
A({\rm Cycl}_F(A))\subseteq {\rm Cycl}_F(A).
$$
\end{lem}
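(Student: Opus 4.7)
The plan is to fix a cyclic vector $x\in {\rm Cycl}_F(A)$ and verify directly that $Ax$ is cyclic. Writing $L:=\operatorname{span}\{A^n(x):n\ge 0\}$, linearity of $A$ gives the identity
\[
\operatorname{span}\{A^n(Ax):n\ge 0\}=\operatorname{span}\{A^{n+1}(x):n\ge 0\}=A(L),
\]
so the entire task reduces to proving that $A(L)$ is dense in $F$, knowing only that $L$ is dense and that $A$ is a continuous surjection.

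For this density step I would argue in the most elementary way: pick any $y\in F$ and use surjectivity of $A$ to choose $z\in F$ with $A(z)=y$. Density of $L$ supplies a net $(\ell_\mu)_{\mu\in\Delta}\subset L$ with $\ell_\mu\to z$ in $F$, and continuity of $A$ forces $A(\ell_\mu)\to A(z)=y$. Since $A(\ell_\mu)\in A(L)$ for each $\mu$, the point $y$ lies in the closure of $A(L)$; $y$ being arbitrary, $A(L)$ is dense, so $Ax\in {\rm Cycl}_F(A)$.

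The argument uses nothing specific about $E$ or the Pommiez operator and does not require the hypothesis (1) on the weights, only the three general properties of $A$: linearity, continuity and surjectivity. There is no genuine obstacle — the author even labels the lemma ``obvious.'' The only point one might pause on is the harmless identification $A(\operatorname{span}\{A^n(x):n\ge 0\})=\operatorname{span}\{A^{n+1}(x):n\ge 0\}$, which follows at once from linearity. Accordingly, I would present the proof as a single short paragraph combining the two displayed observations above.
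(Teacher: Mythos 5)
Your argument is correct and complete: the identity $\operatorname{span}\{A^n(Ax):n\ge 0\}=A(L)$ from linearity, plus the fact that a continuous surjection maps dense sets to dense sets, is exactly the standard reasoning behind this lemma, which the paper dismisses as ``obvious'' and leaves unproved. Nothing to add.
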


\begin{sled}\label{s:1411_1425}
The set ${\rm Cycl}(D_{0,g_0})$ is $D_{0,g_0}$-invariant.
\end{sled}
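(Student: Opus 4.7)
The plan is to reduce Corollary \ref{s:1411_1425} to Lemma \ref{l:1411_1430} by verifying that $D_{0,g_0}:E\to E$ is surjective; the statement then follows immediately. So the only real content is checking surjectivity of $D_{0,g_0}$.

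To see surjectivity, given an arbitrary $h\in E$, I would exhibit an explicit preimage. Inverting the defining equation $D_{0,g_0}(f)(t)=h(t)$ on $\{t\ne 0\}$ suggests the ansatz
$$
f(t):=th(t)+c\,g_0(t),
$$
where $c\in\mathbb C$ is a free parameter (one expects one degree of freedom because $D_{0,g_0}(g_0)=0$). First I would check that $f\in E$: the function $th(t)$ lies in $E$ because $E$ is invariant under multiplication by the independent variable (noted in \S\,1 after condition (1)), and $g_0\in E$ by assumption, so $f\in E$. Then a direct evaluation confirms $D_{0,g_0}(f)=h$: for $t\ne 0$ one has
$$
D_{0,g_0}(f)(t)=\frac{th(t)+cg_0(t)-g_0(t)f(0)}{t}=h(t),
$$
since $f(0)=0\cdot h(0)+c\,g_0(0)=c$ (using $g_0(0)=1$); and at $t=0$, using $f'(0)=h(0)+c\,g_0'(0)$, one gets $D_{0,g_0}(f)(0)=f'(0)-g_0'(0)f(0)=h(0)$.

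With surjectivity in hand, Lemma \ref{l:1411_1430} applied to $F=E$, $A=D_{0,g_0}$ yields $D_{0,g_0}({\rm Cycl}(D_{0,g_0}))\subseteq {\rm Cycl}(D_{0,g_0})$, which is Corollary \ref{s:1411_1425}. There is essentially no obstacle here: the only non-trivial observation is that the stability of $E$ under multiplication by $t$ is exactly what is needed to keep the preimage inside $E$; in particular, the growth condition (1) on the weights $v_{n,k}$ (whose role in \S\,1 was precisely to secure closedness of $E$ under differentiation, translation, and multiplication by the variable) is implicitly invoked here through that stability.
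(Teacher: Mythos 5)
Your proposal is correct and follows essentially the same route as the paper: the paper also reduces to Lemma \ref{l:1411_1430} by exhibiting the explicit preimage $t\mapsto th(t)$ (your ansatz with $c=0$), relying on the invariance of $E$ under multiplication by the independent variable. The extra parameter $c\,g_0$ is harmless but unnecessary.
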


\begin{proof}
The operator $D_{0,g_0}:E\to E$ is surjective. In
fact, for $f\in E$ the function $h(z)=zf(z)$, $z\in \mathbb C$,
belongs to $E$ and $D_{0,g_0}(h)=f$. By Lemma \ref{l:1411_1430}
$D_{0,g_0}({\rm Cycl}(D_{0,g_0}))\subseteq {\rm Cycl}
(D_{0,g_0})$. 
\end{proof}

%%%%%%%%%%%%%%%%%%%%%%%%%%%%%%%%%%%%%%%%%%%%%%%%%%%%%%%%%%%%%%%%%%%%%%%%%%%%%%%%%%%%%%%%%%%

\medskip
Put for $z\in\mathbb C$, $f\in E$
$$
\widetilde T_z(f)(t):=\left\{
\begin{array}{cc}
\frac{f(t)g_0(z)-f(z)g_0(t)}{t-z}, & t\ne z,\\
f'(z)g_0(z)-f(z)g_0'(z), & t\ne z,
\end{array}
\right.
$$
Clearly, $\tilde T_z\in{\mathcal L}(E)$ for all $z\in\mathbb C$.
Note that Krasichkov-Ternovskii \cite[\S~10]{KRTER3}  used 
similar constructions in some spaces of entire
functions of exponential type for the solution of a problem
on the extension of spectral synthesis.

\medskip

\begin{theorem}\label{th:811_1402}
The following assertions are equivalent:
\begin{enumerate}
\item [$(i)$] $f\in {\rm Cycl}(D_{0,g_0})$.
\item [$(ii)$] The
system $\{T_z(f):z\in \mathbb C\}$  is complete in $E$.
%\item [$(iii)$] The system $\{\widetilde T_z(f):z\in \mathbb C\}$ is complete in $E$.
\item [$(iii)$] $f\notin {\rm Ker}\,B$ for each nonzero operator
$B\in \mathcal K(D_{0,g_0})$.
\item[(iv)] The
system $\{\widetilde T_z(f):z\in \mathbb C\}$  is complete in $E$.
\end{enumerate}
\end{theorem}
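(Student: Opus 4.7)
The plan is to establish the cycle $(i) \Leftrightarrow (ii) \Leftrightarrow (iii) \Rightarrow (iv) \Rightarrow (ii)$, translating each completeness statement via Hahn--Banach into the condition that no nonzero $\psi \in E'$ annihilates the indicated system. Three identities will carry the argument: Lemma 1 gives $D_{0,g_0}^n(f)(z) = \varphi_n(T_z(f))$; the formula for $T_z$ is manifestly symmetric in $t$ and $z$, so $\widetilde f(t,z) = \widetilde f(z,t)$; and multiplying out the two definitions directly gives
$$
T_z(f) \;=\; g_0(z)\, f \;+\; z\, \widetilde T_z(f).
$$

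For $(i) \Leftrightarrow (ii)$, I would first prove the Fubini-type identity
$$
\psi(D_{0,g_0}^n(f)) \;=\; \varphi_n\bigl(\, t \mapsto \psi(T_t(f))\, \bigr), \qquad \psi \in E',\ n \ge 0,
$$
by approximating $\psi$ in the Mackey topology $\tau$ by finite linear combinations of the $\varphi_k$. Such a net exists because Remark 2(b) makes $\{\varphi_k\}$ total in $E'$, and by the Mackey--Arens theorem its linear span is then $\tau$-dense. For such combinations the identity reduces via Lemma 4 to the trivial commutation of two $\varphi$'s, and the passage to the limit is exactly what Lemma 2 supplies (followed by continuity of $\varphi_n \in E'$). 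A second application of Remark 2(b) gives $\varphi_n(g) = 0\ \forall n \Leftrightarrow g \equiv 0$, so the annihilators in $E'$ of $\{D_{0,g_0}^n(f): n \ge 0\}$ and of $\{T_z(f): z \in \mathbb C\}$ coincide; Hahn--Banach then delivers $(i) \Leftrightarrow (ii)$.

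The equivalence $(ii) \Leftrightarrow (iii)$ is a direct consequence of Theorem 5: the assignment $l \mapsto B_l$ with $B_l(h)(z) = l(T_z(h))$ is a bijection $E' \to \mathcal K(D_{0,g_0})$ taking $l \ne 0$ to $B_l \ne 0$, and $f \in {\rm Ker}\, B_l$ says precisely that $l$ annihilates $\{T_z(f): z \in \mathbb C\}$.

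For $(iii) \Rightarrow (iv)$, suppose a nonzero $l \in E'$ satisfies $l(\widetilde T_z(f)) = 0$ for every $z$. Substitution into the decomposition gives $B_l(f)(z) = l(f)\, g_0(z)$, so $B_l(f) \in {\rm Ker}\, D_{0,g_0}$ (since $D_{0,g_0}(g_0) = 0$); combined with $B_l D_{0,g_0} = D_{0,g_0} B_l$, this yields $B_l(D_{0,g_0}(f)) = 0$. Under $(iii)$ one has $f \in {\rm Cycl}(D_{0,g_0})$, so Corollary 8 places $D_{0,g_0}(f)$ in ${\rm Cycl}(D_{0,g_0})$ as well, and applying $(iii)$ to $D_{0,g_0}(f)$ forces $B_l = 0$, a contradiction. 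For $(iv) \Rightarrow (ii)$, assume $\psi(T_z(f)) = 0$ for all $z$; setting $z = 0$ gives $\psi(f) = 0$, so the decomposition reduces to $z\,\psi(\widetilde T_z(f)) \equiv 0$ and hence $\psi(\widetilde T_z(f)) = 0$ for $z \ne 0$. The equivalence of annihilators proved in step $(i) \Leftrightarrow (ii)$ gives $\psi(D_{0,g_0}(f)) = 0$, i.e., $\psi(\widetilde T_0(f)) = 0$ as well; so $\psi$ annihilates the full system $\{\widetilde T_z(f): z \in \mathbb C\}$ and $(iv)$ delivers $\psi = 0$. The principal technical hurdle will be the Fubini interchange in $(i) \Leftrightarrow (ii)$; once that identity is in place, everything else is a formal manipulation using the decomposition, Theorem 5, and Corollary 8.
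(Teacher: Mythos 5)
Your proposal is correct and follows essentially the same route as the paper: the same decomposition $T_z(f)=g_0(z)f+z\widetilde T_z(f)$, the same Fubini-type interchange via Lemmas 2--4 and the $\tau$-density of ${\rm span}\{\varphi_n\}$, the same use of Theorem 5 for $(ii)\Leftrightarrow(iii)$, and the same appeal to Corollary 8 in the step producing $(iv)$. The only differences are cosmetic: you package $(i)\Leftrightarrow(ii)$ as a single coincidence-of-annihilators statement (the paper instead routes $(i)\Rightarrow(iii)$ through a separate kernel argument), and you handle the point $z=0$ in $(iv)\Rightarrow(ii)$ via the identity $\widetilde T_0(f)=D_{0,g_0}(f)$ rather than by continuity of the entire function $z\mapsto l(\widetilde T_z(f))$.
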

\begin{proof}
By Theorem 5 assertions $(ii)$ and $(iii)$ are equivalent.

$(i)\Rightarrow  (iii)$: Let $B\in {\mathcal K}(D_{0,g_0})$ be nonzero.
By Theorem 5 there exists $\varphi\in E'\backslash\{0\}$ such that
$B(h)(z):=l(T_z(h))$ for all $h\in E$ and $z\in\mathbb C$. 
We assume that $f\in{\rm Ker}\,B$.
Then for all $n\ge 0$
$$
0=D_0^n(B(f))=B(D_0^n(f)).
$$
Hence $D_0^n(f)\in {\rm Ker}\,B$ for each $n\ge 0$ and
${\rm{span}}\{D_0^n(f):n \ge 0\}\subseteq {\rm{Ker}}\,B$. Since $f$ 
is a cyclic vector for $D_{0,g_0}$, the closed subspace ${\rm Ker}\,B$
of $E$ coincides with $E$ and consequently
$B$ is zero operator. A contradiction.

%%%%%%%%%%%%%%%%%%%%%%%%%%%%%%%%%%%%%%%%%%%%%%%%%%%%%%%%%%%%%%%%%%%%%%%%%%%%%%%%%%%%%%%%%%%%%%%%%%

$(ii)\Rightarrow (i)$: We take $l\in E'$ such that $l(D_{0,g_0}^n(f))=0$ for each
$n\ge 0$. Let $\varphi_n$, $n\ge 0$, be the functionals defined by Lemma 1.
Since the system $\{\varphi_n:n\ge 0\}$ is complete in
$(E',\tau)$, there exists a net
$\{\Phi_\alpha=\sum\limits_{j=0}^{m_\alpha}
a_{j\alpha}\varphi_j:\alpha\in \Lambda\}$ convergent to $l$ in
$(E',\tau)$.

Let as before $\widetilde f(t,z):=T_z(f)(t)$, $f\in E$, $t,z\in
\mathbb C$. By Lemma 1 
$$
D_{0,g_0}^n(f)(z)=(\varphi_n)_t(\widetilde f(t,z)), \ z\in \mathbb C, \, n\ge 0.
$$
Fix $n\ge 0$. Taking into account Lemma 3 and Lemma 4, we have:

$$
0=l(D_{0,g_0}^n(f))=l_z((\varphi_n)_t(\widetilde f(t,z)))=
\lim\limits_{\alpha\in \Lambda}(\Phi_\alpha)_z((\varphi_n)_t(\widetilde f(t,z)))=
$$
$$
\lim\limits_{\alpha\in \Lambda}\sum\limits_{j=0}^{m_\alpha}
a_{j\alpha}(\varphi_j)_z((\varphi_n)_t(\widetilde f(t,z)))
=\lim\limits_{\alpha\in \Lambda}\sum\limits_{j=0}^{m_\alpha}
a_{j\alpha}(\varphi_n)_t((\varphi_j)_z(\widetilde f(t,z)))=
$$
$$
\lim\limits_{\alpha\in \Lambda}(\varphi_n)_t\left(\sum\limits_{j=0}^{m_\alpha}
a_{j\alpha}(\varphi_j)_z(\widetilde f(t,z))\right)=
\lim\limits_{\alpha\in \Lambda}(\varphi_n)_t((\Phi_\alpha)_z(\widetilde f(t,z))=
$$
$$
(\varphi_n)_t(l_z(\widetilde f(t,z)))=(\varphi_n)_t(l_z(T_z(f)(t)))=
$$
$$
(\varphi_n)_t(l_z(T_t(f)(z)))=(\varphi_n)_t(l(T_t(f))).
$$
Hence for the entire function $h(t):=l(T_t(f))$ depending on $t$ we have 
$h^{(n)}(0)=0$ for all $n\ge 0$. Therefore $l(T_t(f))=0$
for all $t\in\mathbb C$, that, by hypothesis, implies that $l=0$.
Consequently, (i) holds.

$(ii)\Rightarrow (iv)$: Since for $t\ne
z$
$$
g_0(z)f(t)-\frac{tf(t)g_0(z)-zf(z)g_0(t)}{t-z}=-z\frac{f(t)g_0(z)-f(z)g_0(t)}{t-z},
$$
the equality
\begin{equation}\label{eq:811_1453}
g_0(z)f-T_z(f)=-z\widetilde T_z(f), \ z\in \mathbb C,
\end{equation}
holds. Fix $l\in E'$ such that $l(\widetilde T_z(f))=0$ for all $z\in
\mathbb C$. Acting by $l$ on the equality (2), we have for all $z\in\mathbb C$
$$
g_0(z)l(f)=l(T_z(f)).
$$
We put for $h\in E$ and $z\in\mathbb C$
$$
B(h)(z):=l(T_z(h)).
$$
By Theorem 5, $B\in \mathcal K(D_{0,g_0})$. Hence for
all $t\in \mathbb C$, since $D_{0,g_0}(g_0)=0$,
$$
0=D_{0,g_0}(l(f)g_0)(t)=D_{0,g_0}(B(f))(t)=
B(D_{0,g_0}(f))(t)=l(T_t(D_{0,g_0}(f))).
$$
By $(ii)\Rightarrow (i)$ the function $f$ is a cyclic vector 
for $D_{0,g_0}$ and, by
Corollary 8, $D_{0,g_0}(f)\in {\rm
Cycl}(D_{0,g_0})$. Consequently, by $(i)\Rightarrow (ii)$,
$l=0$. Therefore (iv) satisfies.

$(iv)\Rightarrow (ii)$: Fix $l\in E'$ such
that $l(T_z(f))=0$ for all $z\in \mathbb C$. If we act by $l$ on
the equality (2), we have for all $z\in\mathbb C$
$$
g_0(z)l(f)=-zl(\widetilde T_z(f)),
$$
that implies, since $g_0(0)=1$, that $l(f)=0$. Consequently, the entire
function $l(\widetilde T_z(f))$ depending on $z$ vanishes on $\mathbb C$.
By hypothesis, $l=0$. Hence (ii) is satisfied.
\end{proof}

\medskip

By $M$ we denote the operator of multiplication by the independent variable:
$$
M(f)(z):=zf(z), \, f\in E, \, z\in\mathbb C.
$$
It follows from (1) that $M\in{\mathcal L}(E)$. 
Denote by $I$ the identity mapping.

\begin{lem}\label{l:142_1539}
Assume that $g_0(\alpha)\ne 0$ for some $\alpha\in \mathbb C$.
Let $f\in E$. Consider assertions
\begin{enumerate}
\item [$(i)$]The system $\{T_z(f):z\in \mathbb C\}$ is complete in
$E$. 
\item [$(ii)$] The system $\{T_z((M-\alpha I)(f)):z\in
\mathbb C\}$ is complete in $E$. 
\end{enumerate}
Then $(i)\Rightarrow (ii)$.
\end{lem}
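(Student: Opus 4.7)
The plan is to apply Theorem~\ref{th:811_1402}: hypothesis~$(i)$ says, via that theorem, that $f$ is cyclic for $D_{0,g_0}$, and assertion~$(ii)$ is equivalent to criterion~$(iii)$ of Theorem~\ref{th:811_1402} applied to $(M-\alpha I)(f)$. Thus it suffices to show that any $B\in\mathcal K(D_{0,g_0})$ with $B((M-\alpha I)(f))=0$ must vanish.

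The key technical step is a commutator identity for $B$ and $M$. By Theorem~5 there is $l\in E'$ with $B(\phi)(z)=l(T_z(\phi))$ for $\phi\in E$, $z\in\mathbb C$; a direct computation using $g_0(0)=1$ shows $T_0=I$, so that $B(\phi)(0)=l(\phi)$. An easy check yields the right-inverse relation $D_{0,g_0}M=I$ together with the description $\ker D_{0,g_0}=\mathbb C g_0$. Combining these with $BD_{0,g_0}=D_{0,g_0}B$ one gets $D_{0,g_0}(B(M\phi)-M(B\phi))=0$, so this difference lies in $\mathbb C g_0$; evaluating at $t=0$ identifies the scalar coefficient and gives
$$
B(M\phi)=M(B\phi)+l(M\phi)\,g_0,\qquad \phi\in E.
$$

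Applied with $\phi=f$, this rewrites the assumption $0=B((M-\alpha I)(f))=B(Mf)-\alpha B(f)$ as the pointwise identity $(t-\alpha)B(f)(t)=-l(Mf)\,g_0(t)$, $t\in\mathbb C$. Evaluating at $t=\alpha$ and using $g_0(\alpha)\ne 0$ forces $l(Mf)=0$, and then the same identity forces $B(f)\equiv 0$. Since $f$ is cyclic, the implication $(i)\Rightarrow(iii)$ of Theorem~\ref{th:811_1402} yields $B=0$, as required. The main difficulty is locating the commutator identity: $M$ itself does not belong to $\mathcal K(D_{0,g_0})$, and one must extract the correction term $l(M\phi)\,g_0$ by exploiting the rank-one kernel of $D_{0,g_0}$; the hypothesis $g_0(\alpha)\ne 0$ enters only at the very end, precisely to conclude $l(Mf)=0$ from the evaluation at $t=\alpha$.
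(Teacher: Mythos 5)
Your proof is correct. It reaches exactly the same key identity as the paper, namely
$$
B\bigl((M-\alpha I)(f)\bigr)(z)=(z-\alpha)B(f)(z)+l(Mf)\,g_0(z),
$$
which is the paper's relation $g_0(z)M(f)-T_z((M-\alpha I)(f))=-(z-\alpha)T_z(f)$ after applying the functional $l$; the endgame (evaluate at $\alpha$, use $g_0(\alpha)\ne 0$ to kill $l(Mf)$, divide out $z-\alpha$ from an entire function, then invoke completeness/cyclicity to conclude $l=0$, i.e. $B=0$) is likewise identical. The only genuine difference is how the identity is obtained: the paper verifies a pointwise algebraic identity between the kernels of $T_z$, $T_z\circ(M-\alpha I)$ and $M$ by direct computation, whereas you derive it structurally from $D_{0,g_0}M=I$, $\ker D_{0,g_0}=\mathbb{C}g_0$, $T_0=I$ and the commutation $BD_{0,g_0}=D_{0,g_0}B$. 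Your derivation is slightly longer but more conceptual: it explains \emph{why} the correction term $l(Mf)g_0$ must appear (it is forced by the rank-one kernel of $D_{0,g_0}$), and it would survive in any setting where these structural relations hold, without needing the explicit kernel of $T_z$. The paper's computation is shorter and self-contained. Both are valid; all the auxiliary facts you use ($T_0=I$, $D_{0,g_0}M=I$, $\ker D_{0,g_0}=\mathbb{C}g_0$, and the non-circularity of the equivalence $(ii)\Leftrightarrow(iii)$ in Theorem~\ref{th:811_1402}) check out.
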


\begin{proof}
We use the equality
$$
tg_0(z)f(t)-\frac{t(t-\alpha)f(t)g_0(z)-z(z-\alpha)f(z)g_0(t)}{t-z}=
$$
$$
-(z-\alpha)\frac{tf(t)g_0(z)-zf(z)g_0(t)}{t-z}, \,\, t\ne z.
$$
This implies that for all $z\in\mathbb C$
\begin{equation}\label{eq:142_1601}
g_0(z)M(f) - T_z((M-\alpha I)(f))=-(z-\alpha)T_z(f).
\end{equation}

%(I) 
Assume that $(i)$ holds. We take
$l\in E'$ such that $l(T_z((M-\alpha I)(f)))=0$ for all $z\in
\mathbb C$. Acting by $l$ on the equality (3), we
have for all $z\in\mathbb C$
$$
g_0(z)l(M(f))=-(z-\alpha)l(T_z(f)).
$$
From $g_0(\alpha)\ne 0$ it follows that $l(M(f))=0$. Since  
$l(T_z(f))$ is
an entire function depending on $z$, then
$l(T_z(f))=0$ for all $z\in \mathbb
C$. By hypothesis, $l=0$. Hence (ii) satisfies.
\end{proof}

\begin{icomment}\label{i:53_2229}{\rm  If $g_0(\alpha)=0$ 
and $f(\alpha)=0$ for some $\alpha\in \mathbb C$ and some 
$f\in E$,
then $D_{0,g_0}^n(f)(\alpha)=0$ for
all $n\ge 0$.
}
\end{icomment}

\begin{sled}\label{s:142_1709} (i) Let $f\in {\rm Cycl}(D_{0,g_0})$
and $P$ be a polynomial. Then $Pf\in {\rm Cycl}(D_{0,g_0})$ if
and only if $P$ does not have common zeros with $g_0$.

\noindent
(ii) Assume that the function $g_0$ has no zeros 
in $\mathbb C$.
Then $Pf\in {\rm Cycl}(D_{0,g_0})$ for each 
$f\in {\rm Cycl}(D_{0,g_0})$ and for each nonzero 
polynomial
$P$.
\end{sled}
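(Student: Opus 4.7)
Part (ii) is an immediate consequence of (i): if $g_0$ has no zeros in $\mathbb C$, then no nonzero polynomial can share a zero with $g_0$. So it suffices to prove (i), and my plan is to handle the two directions of the equivalence separately, using Remark~\ref{i:53_2229} for the forward implication and Theorem~\ref{th:811_1402} together with Lemma~\ref{l:142_1539} for the converse.

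For the forward direction I would argue by contrapositive. Suppose some $\alpha\in\mathbb C$ satisfies $P(\alpha)=g_0(\alpha)=0$. Then $(Pf)(\alpha)=0$, so Remark~\ref{i:53_2229} applied to $Pf$ yields $D_{0,g_0}^n(Pf)(\alpha)=0$ for every $n\ge 0$. Since $E$ is translation invariant and contains $g_0$ with $g_0(0)=1$, the shifted function $z\mapsto g_0(z-\alpha)$ belongs to $E$ and takes the value $1$ at $\alpha$; hence the point evaluation $\delta_\alpha$ is a nonzero continuous functional on $E$. Consequently the whole orbit $\{D_{0,g_0}^n(Pf):n\ge 0\}$ lies in the proper closed subspace $\ker\delta_\alpha$, which rules out cyclicity of $Pf$.

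For the converse, assume $P$ and $g_0$ have no common zeros and write $P(z)=c\prod_{i=1}^n(z-\alpha_i)$, where $g_0(\alpha_i)\ne 0$ for each $i$. Since each $M-\alpha_i I$ is multiplication by the polynomial $z-\alpha_i$, these operators commute and $Pf=c\,(M-\alpha_1 I)\cdots(M-\alpha_n I)(f)$. Theorem~\ref{th:811_1402} identifies cyclicity of any $h\in E$ with completeness of $\{T_z(h):z\in\mathbb C\}$ in $E$. Starting from cyclicity of $f$, one application of Lemma~\ref{l:142_1539} with $\alpha=\alpha_1$ gives completeness of the translate system for $(M-\alpha_1 I)(f)$, whence cyclicity of $(M-\alpha_1 I)(f)$ via Theorem~\ref{th:811_1402}. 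Iterating this step $n$ times yields cyclicity of $(M-\alpha_1 I)\cdots(M-\alpha_n I)(f)$, and multiplying by the nonzero scalar $c$ preserves cyclicity, so $Pf\in {\rm Cycl}(D_{0,g_0})$. No substantive obstacle arises here; the only point that needs attention is that Theorem~\ref{th:811_1402} is invoked at each inductive step to turn the previous cyclicity conclusion into precisely the completeness hypothesis that Lemma~\ref{l:142_1539} requires at the next stage.
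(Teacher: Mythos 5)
Your proof is correct and follows essentially the same route as the paper: the forward direction via Remark~\ref{i:53_2229} (the orbit of $Pf$ is trapped in the proper closed hyperplane $\ker\delta_\alpha$), and the converse by factoring $P$ into linear factors and iterating Lemma~\ref{l:142_1539} through the completeness criterion of Theorem~\ref{th:811_1402}. The paper states this more tersely, but your explicit induction on the factors of $P$ and your justification that $\delta_\alpha$ is a nonzero continuous functional are exactly the details the paper's argument relies on.
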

\begin{proof} (i): Let $Pf\in {\rm Cycl}(D_{0,g_0})$. 
We assume that $P$ and $g_0$ have a 
common zero $\alpha\in \mathbb C$. Then
${D_{0,g_0}^n(Pf)(\alpha)=0}$ for all $n\ge 0$. Therefore each
function in $F:={\rm span}\{D_{0,g_0}^n(Pf):\,n\ge 0\}$ and in the 
closure $F$ in $E$, which coincides with $E$,
vanishes at the point $\alpha$. A contradiction.

If $P$ and $g_0$ do not have common zeros the function
$Pf$ is a cyclic vector by Lemma 10.

(ii) follows from (i).
\end{proof}

\medskip

For $z\in\mathbb C$ the Pommiez operators $D_z$ 
are defined by
$$
D_z(f)(t):=\left\{
\begin{array}{cc}
\frac{f(t)-f(z)}{t-z}, & t\ne z,\\
f'(z),& t=z,
\end{array}
\right.
$$
for $f\in H(\mathbb C)$.
If $g_0\equiv 1\in E$, all operators $D_z$ map 
continuously and linearly $E$ into itself.

\begin{sled}\label{s:142_1721} Let $g_0\equiv 1$, $f\in E$.
The following statements are equivalent:
\begin{enumerate}
\item [$(i)$] $f\in {\rm Cycl}(D_{0})$. 
\item [$(ii)$] The system
$\{T_z(f):z\in \mathbb C\}$  is complete in в $E$. 
\item [$(iii)$]
The system $\{D_z(f):z\in \mathbb C\}$ is complete in $E$.
\end{enumerate}
\end{sled}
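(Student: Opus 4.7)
The plan is to derive this corollary as a direct specialization of Theorem \ref{th:811_1402} to the case $g_0\equiv 1$. First I would observe that under the assumption $g_0\equiv 1$ the Pommiez operator $D_{0,g_0}$ reduces to $D_0$, so item (i) of the corollary is literally item (i) of Theorem \ref{th:811_1402}, and item (ii) of the corollary is literally item (ii) of that theorem. Hence the equivalence $(i)\Leftrightarrow(ii)$ here is an immediate restatement.

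The only thing left is to match the system in (iii) with assertion (iv) of Theorem \ref{th:811_1402}. I would plug $g_0\equiv 1$ into the definition of $\widetilde T_z$: for $t\ne z$,
$$
\widetilde T_z(f)(t)=\frac{f(t)g_0(z)-f(z)g_0(t)}{t-z}=\frac{f(t)-f(z)}{t-z}=D_z(f)(t),
$$
and at $t=z$ both expressions equal $f'(z)$ (either by the defining formula or by continuity, since both sides are entire in $t$). Therefore $\widetilde T_z(f)=D_z(f)$ for every $z\in\mathbb C$, so the system $\{D_z(f):z\in\mathbb C\}$ in (iii) coincides with the system $\{\widetilde T_z(f):z\in\mathbb C\}$ in (iv) of Theorem \ref{th:811_1402}.

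With these two identifications the corollary is just Theorem \ref{th:811_1402} transcribed into the special case $g_0\equiv 1$: we apply the established chain $(i)\Leftrightarrow(ii)\Leftrightarrow(iv)$ to conclude $(i)\Leftrightarrow(ii)\Leftrightarrow(iii)$. There is no genuine obstacle here — the only point requiring a line of verification is the pointwise identity $\widetilde T_z(f)=D_z(f)$, which is an elementary computation. Accordingly the proof will be essentially one short paragraph citing Theorem \ref{th:811_1402} after recording this identity.
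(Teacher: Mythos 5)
Your proposal is correct and is essentially identical to the paper's own argument: the paper likewise deduces the corollary from Theorem \ref{th:811_1402} by noting that $D_z(f)=\widetilde T_z(f)$ when $g_0\equiv 1$. Nothing is missing.
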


It follows from Theorem 9, since
$D_z(f)=\widetilde T_z(f)$ if $g_0\equiv 1$.

%%%%%%%%%%%%%%%%%%%%%%%%%%%%%%%%%%%%%%%%%%%%%%%%%%%%%%%%%%%%%%%%%%%%%%%%%%%%%%%%%%%%%%%

\medskip
Further we will use essentially the following relation between
operators $\tilde T_z$ and $D_z$.

\medskip
\begin{lem}
For all $f\in E$, $z\in\mathbb C$
$$
\tilde T_z(f)=g_0(z)D_z(f)-f(z)D_z(g_0).
$$
\end{lem}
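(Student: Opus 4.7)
The plan is to verify the identity by direct computation of both sides at $t \ne z$ and then handle the diagonal $t = z$ separately (or by continuity, since all three operators produce entire functions in $t$).

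First I would fix $z \in \mathbb{C}$ and $f \in E$, and compute the right-hand side at a point $t \ne z$ using the definitions of $D_z(f)$ and $D_z(g_0)$:
\begin{equation*}
g_0(z)D_z(f)(t) - f(z)D_z(g_0)(t) = g_0(z)\,\frac{f(t)-f(z)}{t-z} - f(z)\,\frac{g_0(t)-g_0(z)}{t-z}.
\end{equation*}
Putting the two fractions over a common denominator, the cross terms $g_0(z)f(z)$ and $f(z)g_0(z)$ cancel, leaving
\begin{equation*}
\frac{g_0(z)f(t) - f(z)g_0(t)}{t-z},
\end{equation*}
which is exactly $\tilde T_z(f)(t)$ by definition.

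For the boundary case $t = z$, I would simply evaluate both sides at $t = z$ using the piecewise definitions: the right-hand side gives $g_0(z)f'(z) - f(z)g_0'(z)$, and $\tilde T_z(f)(z)$ is defined to be $f'(z)g_0(z) - f(z)g_0'(z)$, so the two agree. Alternatively, since $\tilde T_z(f)$, $D_z(f)$ and $D_z(g_0)$ are all entire functions of $t$ (by removable-singularity considerations, as noted when the operators were introduced), the identity on $\mathbb{C}\setminus\{z\}$ extends to $t = z$ by continuity. There is no real obstacle here; the computation is a one-line algebraic manipulation that makes precise the polarization-type relation between the shift operator $\tilde T_z$ associated with $g_0$ and the classical Pommiez operators $D_z$.
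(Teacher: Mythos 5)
Your computation is correct and is exactly what the paper intends: it states only that ``this assertion can be verified directly,'' and your direct verification for $t\ne z$ (with the cross terms cancelling) together with the evaluation at $t=z$ supplies precisely that. No further comment is needed.
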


(We note that $\tilde T_z(f)\in E$, but $g_0(z)D_z(f)$ and
$f(z)D_z(g_0)$ can need not belong to $E$.)

\medskip

This assertion can be verified directly.

\section{A concrete example}

Let $Q$ be a convex locally closed set in $\mathbb C$, 
that is,
a convex set having a countable fundamental sequence of
compact subsets (see \cite{MATHSCAND}, \cite{PUBL}). 
By \cite[Lemma 1.2]{MATHSCAND}  $Q$ is the union of
the relative interior of $Q$
and an open portion of the relative boundary of $Q$.
The family of such sets $Q$ contains all convex
domains in $\mathbb C$ and all convex compact sets in 
$\mathbb C$.
We assume that $0\in Q$. Let
$(Q_n)_{n\in\mathbb N}$ be an increasing fundamental 
sequence of compact sets in $Q$. 
Without loss
of generality all sets $Q_n$ are convex and $0\in Q_1$.
We consider the spaces $H(Q_n)$ of all
functions which are holomorphic on some neighborhood
of $Q_n$ with their natural inductive topology.
Let $H(Q)$ be the vector space of all holomorphic functions 
on $Q$, that is, 
holomorphic on some open neighborhood of $Q$. Since the 
algebraic equality 
$H(Q)=\bigcap\limits_{n\in\mathbb N} H(Q_n)$ holds
we endow $H(Q)$ with the projective 
topology of $H(Q):={\rm proj}_{\gets n}H(Q_n)$.

By $H_\Omega(z):=\sup\limits_{t\in \Omega}{\rm Re}(tz)$, 
$z\in \mathbb C$, 
we denote the support 
function of a set 
$\Omega\subset\mathbb C$. Put
$$
v_{n,k}(z):=H_{Q_n}(z)+|z|/k, \ z\in \mathbb C, \ n,\, k\in\mathbb N.
$$
All functions $v_{n,k}$ are subadditive and positively
homogeneous of degree $1$. The sequence $(v_{n,k})_{n,k\in\mathbb N}$
satisfies the condition (1).

We put $E_Q:=E$. Note that for each $n\in\mathbb N$
an entire function $h$ of exponential type 
belongs to $E_n$ if and only if
the conjugate diagram of $h$ is contained in $Q_n$.

By \cite[Lemma 1.10]{MATHSCAND} %Theorem 4.7.3]{HERMNOT} 
the Laplace transform
$$
\mathcal F(\varphi)(z):=\varphi(e^{\cdot z}), \,\,
\varphi\in H(Q)', \, z\in\mathbb C,
$$
is a topological isomorphism of the strong dual of $H(Q)$ 
onto the space $E_Q$. 
We will describe cyclic vectors for an operator $D_{0,g_0}$ in $E_Q$.

\subsection{An interpolating function}

Let $K$ be a convex compact set in $\mathbb C$ such that $0\in K$. 
For an entire function $f$ of exponential type 
with the conjugate diagram contained in $K$, 
for $x\in H(K)$ Leont'ev
(see \cite{Leontev}) introduced the interpolating function
$\omega_f:\mathbb C\times H(K) \to\mathbb C$ by
$$
\omega_f(z,x):=\frac{1}{2\pi i}
\int\limits_C\left(\int\limits_0^t x(t-\xi)e^{z\xi}d\xi\right)
\gamma_f(t)dt,
\,\,\, z\in\mathbb C,\,\, x\in H(K).
$$
Here $C$ is a closed convex curve which surrounds $K$, 
the function $x$
is holomorphic on the closure of the interior of the curve $C$,
$\gamma_f$ is the Borel transform of $f$.
The inner integral is taken along the 
segment $[0,t]$. Note that $\omega_f(z,x)$ is the entire 
function depending on $z$.

By \cite[Theorem 1]{Leontev} the following uniqueness theorem %4.3.1
holds.

\medskip
\begin{theorem} Let $K$ be a convex compact set in $\mathbb C$ 
containing $0$, $f$ be an entire function of exponential type 
with the conjugate diagram contained in $K$.
If $f$ has infinitely many zeros and
for $x\in H(K)$ the function $\omega_f(z,x)/f(z)$ is entire 
one depending on $z$,
then $x=0$.
\end{theorem}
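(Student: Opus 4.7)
The plan is to show $\omega_f(\cdot, x) \equiv 0$ and then conclude $x \equiv 0$ via the nondegeneracy of the map $x \mapsto \omega_f(\cdot, x)$.

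I begin with a growth estimate for $\omega_f$. Parametrizing the inner integral along the segment $[0, t]$ and using $0 \in K$ together with a deformation of the contour $C$ toward $\partial K$, I obtain
$$|\omega_f(z, x)| \le C_\varepsilon \exp\bigl(h_K(z) + \varepsilon|z|\bigr)$$
for every $\varepsilon > 0$, where $h_K$ is the support function of $K$. Thus $\omega_f(\cdot, x)$ is entire of exponential type with conjugate diagram contained in $K$.

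Write $g(z) := \omega_f(z, x)/f(z)$, entire by hypothesis. Using a Levin-type minimum-modulus lower bound on $|f|$ outside exceptional disks around its zeros (applicable since $f$ is entire of exponential type with infinitely many zeros) together with the Phragm\'en--Lindel\"of principle for $g$, one concludes that $g$ is of minimal exponential type. Expanding $x$ in its Taylor series inside the integral and using the Borel identity $\frac{1}{2\pi i}\int_C t^N \gamma_f(t)\,dt = f^{(N)}(0)$ yields
$$\omega_f(z, x) \;=\; \sum_{k \ge 0} z^k \sum_{m \ge 0} x^{(m)}(0)\,\frac{f^{(m+k+1)}(0)}{(m+k+1)!}.$$
Comparing with the Taylor series of $fg$ and noting that each coefficient of $z^k$ in $\omega_f(z,x)$ equals the Borel--Laplace pairing $\frac{1}{2\pi i}\int_C x(t)\gamma_{D_0^{k+1}(f)}(t)\,dt$, the minimal type of $g$ together with the zero distribution of $f$ can be used to force $g \equiv 0$, and hence $\omega_f(\cdot, x) \equiv 0$. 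The nondegeneracy of the Laplace-transform duality between $H(K)$ and the space of entire functions of exponential type with conjugate diagram in $K$ (Lemma~1.10 of the Martineau--Ehrenpreis framework cited in \S3) then forces $x \equiv 0$.

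The principal obstacle is the middle step: the transition from ``$g$ has minimal exponential type'' to ``$g \equiv 0$'' requires delicate use of Levin's theory of entire functions of completely regular growth, with careful bookkeeping on the indicator $h_f$ relative to $h_K$ and on the exceptional disks around the zeros of $f$. The final injectivity argument then reduces to a completeness statement for the system $\{D_0^{k+1}(f)\}_{k\ge 0}$ in the target space of the Laplace transform, which is afforded by the hypothesis that $f$ has infinitely many zeros.
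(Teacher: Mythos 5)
This statement is quoted in the paper as Leont'ev's uniqueness theorem (\cite[Theorem~1]{Leontev}); the paper gives no proof of it, so there is no internal argument to compare against, and your attempt must stand on its own. It does not: both of the steps you yourself flag as delicate are genuine gaps. The claim that $g=\omega_f(\cdot,x)/f$ is of minimal exponential type does not follow from a minimum-modulus estimate for a general entire function of exponential type. Without assuming completely regular growth of $f$ (which is not a hypothesis), the Cartan-type lower bound outside exceptional disks is only $|f(z)|\ge e^{-(\sigma+\varepsilon)|z|}$ with $\sigma$ the type of $f$, which combined with $|\omega_f(z,x)|\le C_\varepsilon e^{H_K(z)+\varepsilon|z|}$ gives merely that $g$ is of some finite exponential type. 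Worse, since the conjugate diagram of $f$ need not contain $0$ and need not fill $K$, the indicator of $\omega_f(\cdot,x)$ (which is bounded by $\max(h_f,0)$, not by $h_f$) can strictly exceed that of $f$ in some directions, so even the heuristic $h_g\le h_{\omega_f}-h_f\le 0$ fails. And even granting minimal type, minimal-type entire functions need not vanish; the passage to $g\equiv 0$ is exactly the content you defer to ``delicate use of Levin's theory,'' i.e.\ it is not carried out.

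The concluding step is, in addition, circular relative to this paper. By Remark~16(a), $\omega_f(z,x)=\langle x,D_z(f)\rangle$, so the implication $\omega_f(\cdot,x)\equiv 0\Rightarrow x=0$ is precisely the completeness of the system $\{D_z(f):z\in\mathbb C\}$ in $E_K$, equivalently (Corollary~13) the cyclicity of $f$ for $D_0$ --- which is what Theorem~19 deduces \emph{from} the present uniqueness theorem. Saying that this completeness ``is afforded by the hypothesis that $f$ has infinitely many zeros'' assumes the very statement under proof. A correct argument must extract $x=0$ directly from the analytic hypothesis that $\omega_f(\cdot,x)/f$ is entire (as Leont'ev does), not by appeal to the orbit-completeness that this theorem is used to establish.
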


\medskip
Let $e_\nu(z):=e^{\nu z}$, \, $z, \nu\in\mathbb C$.

\begin{icomment} {\rm
(a)
Put
$$
\langle x,h\rangle:={\mathcal F}^{-1}(h)(x),\,\, x\in H(K),\,
h\in E_K.
$$
By \cite[Example 1]{Ufa}
$$
\langle x,D_z(f)\rangle=\omega_f(z,x), \,\, x\in H(K),\,
f\in E_K, \, z\in\mathbb C.
$$

\noindent
(b) For each $\nu\in K$, $j\ge 0$,
for the function $h(z):=z^j e^{\nu z}$
$$
\gamma_h(t)=j!(t-\nu)^{-j-1}.
$$

\noindent
(c) The function $e_\nu$ belongs to $E_Q$ if and only if
$\nu\in Q$. 

\noindent
(d) For each $\nu\in Q$, $x\in H(Q)$, $z\in\mathbb C$
$$
\omega_{e_\nu}(z,x)=\int\limits_0^\nu x(\nu-\xi)e^{z\xi}d\xi=
e^{\nu z}\int\limits_0^\nu x(\eta)e^{-z\eta}d\eta.
$$
}
\end{icomment}

\subsection{Cyclic elements of the Pommiez operator in $E_Q$}

\medskip
\begin{icomment} {\rm
(a) Since $0\in Q$, the function $h\equiv 1$ belongs to $E_Q$. Hence
$D_z\in{\mathcal L}(E_Q)$ for each $z\in\mathbb C$.

\noindent
(b) For each function $h\in E_Q$ the following alternative is valid:
either $h$ has infinitely many zeros or there exist $\lambda\in Q$ 
and a polynomial $P$ such that $h=Pe_\lambda$.
}
\end{icomment}

\medskip
\begin{lem}
Let $y$ be a continuous function on a segment 
$[\alpha,\beta]$ ($\alpha\neq\beta$)
and assume that the entire function 
$U(z)=e^{\alpha z}\int\limits_{\alpha}^{\beta} y(\eta)e^{-z\eta}d\eta$
is a polynomial. 
Then $y=0$ on $[\alpha,\beta]$.
\end{lem}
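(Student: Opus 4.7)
The plan is to reduce the problem to a scalar Laplace-transform identity on $[0,1]$, show that the polynomial on the right-hand side is in fact zero, and then conclude by Fourier uniqueness. I would first parametrize the segment by $\eta=\alpha+t(\beta-\alpha)$, $t\in[0,1]$, and set $\tilde y(t):=y(\alpha+t(\beta-\alpha))$ (continuous on $[0,1]$) together with $w:=z(\beta-\alpha)$. Since $\beta\ne\alpha$, the map $z\mapsto w$ is a bijection of $\mathbb C$, and a direct substitution in $U$ cancels the factor $e^{\alpha z}$, yielding
$$
P(w):=U\!\left(\tfrac{w}{\beta-\alpha}\right)=(\beta-\alpha)\int_0^1 \tilde y(t)e^{-wt}\,dt,\qquad w\in\mathbb C,
$$
which is still a polynomial in $w$.

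Next, on the right half-plane $\{\operatorname{Re}w\ge 0\}$ one has $|e^{-wt}|\le 1$ for $t\in[0,1]$, so $|P(w)|\le |\beta-\alpha|\,\|\tilde y\|_\infty$. A polynomial bounded on a half-plane must be constant (a nonconstant polynomial $a_nw^n+\dots$ grows like $|a_n||w|^n$ along any ray inside the half-plane), so $P\equiv c$ for some $c\in\mathbb C$. Letting $w=r\to+\infty$ along the positive real axis and invoking dominated convergence (majorant $\|\tilde y\|_\infty$, pointwise limit $0$ on $(0,1]$) then forces $c=0$.

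Hence $\int_0^1 \tilde y(t)e^{-wt}\,dt=0$ for every $w\in\mathbb C$. Specializing to $w=2\pi ik$ for $k\in\mathbb Z$ shows that every Fourier coefficient of $\tilde y$ vanishes, so by uniqueness of Fourier series for continuous functions on $[0,1]$ we get $\tilde y\equiv 0$, whence $y\equiv 0$ on $[\alpha,\beta]$. The only mildly delicate point is checking the reparametrization carefully — that the polynomial structure genuinely transfers to $P$ and that the condition $\operatorname{Re}(z(\beta-\alpha))\ge 0$ is indeed a standard right half-plane in the variable $w$ — after which the three conclusions (boundedness $\Rightarrow$ constancy, decay at $+\infty$ $\Rightarrow c=0$, Fourier uniqueness $\Rightarrow \tilde y=0$) are entirely routine.
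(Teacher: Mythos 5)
Your proof is correct and follows essentially the same route as the paper: reduce to a one--variable Laplace transform over a real parameter interval, show the polynomial vanishes because the integral decays along a ray pointing into the ``good'' half-plane, and then conclude that the density is zero from the vanishing of the transform. The only cosmetic differences are that you insert an intermediate ``bounded on a half-plane $\Rightarrow$ constant'' step where the paper goes directly to the limit $0$ along one ray, and that you finish with Fourier-coefficient uniqueness at $w=2\pi i k$ whereas the paper invokes the completeness of the exponentials $\{e_z : z\in\mathbb C\}$ in $C[0,\rho]$; both finishes are standard and valid.
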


\begin{proof} We note that
$$
U(z)=e^{\alpha z}\int\limits_{\alpha}^{\beta} y(\eta)e^{-z\eta}d\eta=
\int\limits_{\alpha}^{\beta}y(\eta)e^{z(\alpha-\eta)}d\eta=
-\int\limits_0^{\alpha-\beta}y(\alpha-\xi)e^{z\xi}d\xi.
$$
Let $\alpha-\beta=\rho e^{i\varphi}$ with $\rho>0$
 and $\varphi\in\mathbb R$.
We make the change of the variable 
$\xi=re^{i\varphi}$,
where $r\in [0,\rho]$, and we take
$z$ in the form $z:=-te^{-i\varphi}$ with $t>0$. 
Then
$$
U(z)=-\int\limits_0^\rho
y(\alpha-re^{i\varphi})e^{-tr}e^{i\varphi}dr.
$$
Hence there exists $D>0$ such that
$$
|U(z)|\le D\int\limits_0^\rho e^{-tr}dr=-\frac{D}{t}(e^{-\rho t}-1).
$$
From here it follows that $|U(z)|\to 0$ as $t\to+\infty$.
Since $U$ is a polynomial, $U=0$ on $\mathbb C$.
From the completeness of the system $\{e_z\,:\,z\in\mathbb C\}$ 
in the space
of all continuous functions on the segment $[0,\rho]$ 
it follows that the function
$y$ vanishes on $[\alpha,\beta]$.
\end{proof}

\medskip
\begin{theorem}\label{th:11}
(I) Suppose that the function $g_0$ has infinitely many zeros.
The following assertions are equivalent:

\begin{itemize}
\item[(i)] $f\in {\rm Cycl}(D_{0,g_0})$.
\item[(ii)] Functions $f$ and $g_0$ do not have common zeros.
% and $f$ is not a function of the form $Pg_0$ 
%where $P$ is a polynomial.
\end{itemize}

(II) Suppose that $g_0=Pe_\lambda$ for some $\lambda\in Q$ and 
for some polynomial 
$P$ with $P(0)=1$. The following assertions are equivalent:

\begin{itemize}
\item[(i)] $f\in {\rm Cycl}(D_{0,g_0})$.
\item[(iii)] Functions $f$ and $g_0$ do not have common zeros
and $f$ is not a function of the form $Re_\lambda$ 
where $R$ is a polynomial.
\end{itemize}

\end{theorem}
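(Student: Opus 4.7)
The plan is to handle necessity first and then reduce sufficiency in both parts to analyzing annihilators of $\{\widetilde T_z(f):z\in\mathbb C\}$ via the characterization in Theorem~9, ultimately transporting the problem to a functional equation in $H(Q)$ through the Laplace duality.

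For necessity, if $f$ and $g_0$ share a zero $\alpha$, Remark~11 gives $D_{0,g_0}^n(f)(\alpha)=0$ for all $n\ge 0$, so the closure of $\mathrm{span}\{D_{0,g_0}^n(f)\}$ lies in the proper closed hyperplane $\{h\in E_Q:h(\alpha)=0\}$ and $f$ is not cyclic. In Part~(II), if additionally $f=Re_\lambda$, a direct computation using $g_0=Pe_\lambda$ and $P(0)=1$ gives $D_{0,g_0}(Se_\lambda)(t)=e^{\lambda t}(S(t)-S(0)P(t))/t$, again a polynomial multiple of $e_\lambda$ of degree at most $\max(\deg S,\deg P-1)$. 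Iterating, the entire orbit of $Re_\lambda$ lies in a finite-dimensional, hence closed and proper, subspace of $E_Q$, so $Re_\lambda$ is not cyclic.

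For sufficiency, fix $l\in E_Q'$ annihilating $\{\widetilde T_z(f):z\in\mathbb C\}$; by the Laplace isomorphism, $l=\langle x,\cdot\rangle$ for some $x\in H(Q)$. Since $1\in E_Q$, each $D_z$ acts continuously on $E_Q$ by Remark~17(a), so Lemma~14 lets us write
\[
l(\widetilde T_z(f))=g_0(z)\,\omega_f(z,x)-f(z)\,\omega_{g_0}(z,x),
\]
using $\langle x,D_z(h)\rangle=\omega_h(z,x)$ from Remark~16(a). The annihilation condition thus collapses to the single identity $g_0(z)\omega_f(z,x)=f(z)\omega_{g_0}(z,x)$ for all $z\in\mathbb C$. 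Tracking vanishing orders at the (infinitely many) zeros of $g_0$, the no-common-zeros hypothesis forces $\omega_{g_0}(\cdot,x)/g_0$ to be entire, and Leont'ev's uniqueness Theorem~15 applied on some $K=Q_n$ containing the conjugate diagram of $g_0$ yields $x=0$. This settles Part~(I); the same argument applied to $f$ in place of $g_0$ also settles the subcase of Part~(II) in which $f$ has infinitely many zeros.

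The main obstacle is the remaining subcase of Part~(II): by Remark~17(b), $f=Se_\mu$ with $S$ polynomial, and hypothesis~(iii) forces $\mu\neq\lambda$. Since $f$ has only finitely many zeros, Leont'ev is unavailable. The strategy is to expand both interpolating functions explicitly via Remark~16(b),(d) as
\[
\omega_{Pe_\lambda}(z,x)=P(z)e^{\lambda z}\phi(\lambda,z)+L_P(z),\qquad \omega_{Se_\mu}(z,x)=S(z)e^{\mu z}\phi(\mu,z)+L_S(z),
\]
where $\phi(\nu,z):=\int_0^\nu x(\eta)e^{-z\eta}\,d\eta$ and $L_P,L_S$ are polynomials in $z$ of degrees strictly less than $\deg P$ and $\deg S$ respectively, whose coefficients are linear combinations of the Taylor coefficients of $x$ at $\lambda$ and $\mu$. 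Since $\omega_{g_0}/g_0$ and $\omega_f/f$ are both entire, the proper rational functions $L_P/P$ and $L_S/S$ must vanish identically, so $L_P=L_S=0$. The functional equation then reduces to $\phi(\lambda,z)=\phi(\mu,z)$ for all $z$, that is $\int_\lambda^\mu x(\eta)e^{-z\eta}\,d\eta\equiv 0$. Lemma~18, applied with $y=x|_{[\lambda,\mu]}$ and $U\equiv 0$, forces $x=0$ on $[\lambda,\mu]$; since $Q$ is convex hence connected and $x\in H(Q)$, analytic continuation gives $x\equiv 0$, so $l=0$ and $f$ is cyclic.
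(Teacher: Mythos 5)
Your proof is correct, and its backbone is the same as the paper's: reduce cyclicity to completeness of $\{\widetilde T_z(f):z\in\mathbb C\}$ via Theorem 9, convert the annihilation condition into the identity $g_0(z)\omega_f(z,x)=f(z)\omega_{g_0}(z,x)$ via Lemma 14 and Remark 16(a), invoke Leont'ev's uniqueness theorem (Theorem 15) whenever $g_0$ or $f$ has infinitely many zeros, and handle the exponential-polynomial case by the explicit Borel-transform expansions together with Lemma 18. You depart from the paper in two local places, both legitimately and arguably to advantage. First, for $(i)\Rightarrow(iii)$ in Part (II) the paper constructs an explicit nonzero annihilator $x$ with $x^{(r)}(\lambda)=0$ for $0\le r\le\max(m,s)$ and verifies $\langle x,\widetilde T_z(f)\rangle\equiv 0$; you instead note that $D_{0,g_0}(Se_\lambda)=e_\lambda\cdot\bigl(S(\cdot)-S(0)P(\cdot)\bigr)/z$ keeps the orbit of $Re_\lambda$ inside the finite-dimensional, hence closed and proper, subspace $e_\lambda\cdot\mathbb C[z]_{\max(\deg R,\,\deg P-1)}$ -- a more elementary argument, and in fact the same mechanism the paper uses later in Corollary 20(i). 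Second, in the sufficiency proof for $f=Se_\mu$ you treat a general polynomial $S$ at once by first deducing $L_P=L_S=0$ from the entirety of $\omega_{g_0}(\cdot,x)/g_0$ and $\omega_f(\cdot,x)/f$ (valid because the no-common-zeros hypothesis makes both quotients entire and $\deg L_P<\deg P$, $\deg L_S<\deg S$), which reduces the functional equation to $\int_\lambda^\mu x(\eta)e^{-z\eta}\,d\eta\equiv 0$; the paper instead first takes $S\equiv\mathrm{const}$, shows that $e^{\lambda z}\int_\lambda^\mu x(\eta)e^{-z\eta}\,d\eta$ equals the polynomial $W/P$, applies Lemma 18 with that possibly nonzero polynomial, and only then passes to general $S$ via Corollary 12(i). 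Both variants are sound; yours saves the extra reduction step at the cost of tracking the degree bounds on $L_P$ and $L_S$ explicitly.
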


\begin{proof} (I) $(i)\Rightarrow(ii)$: We assume that $f$ and $g_0$ have a 
common zero $\alpha\in \mathbb C$. Proceeding as in the proof of (i) of
Corollary 12, we conclude that
each function in $E$ vanishes in $\alpha$. A contradiction.

%%%%%%%%%%%%%%%%%%%%%%%%%%%%%%%%%%%%%%%%%%%%%%%%%%%%%%%%%%%%%%%%%%%%%%%%%%%%%%%%%%%%%

\medskip
\noindent
$(ii)\Rightarrow(i)$: We take $n\in\mathbb N$ such that $g_0\in E_n$. 
The conjugate diagram of $g_0$ is contained in $Q_n$. 
Fix $x\in H(Q)\subset H(Q_n)$ such that 
$\langle x,\tilde T_z(f)\rangle=0$ 
for all $z\in\mathbb C$. By Lemma 14 for each $z\in\mathbb C$
$$
g_0(z)\langle x,D_z(f)\rangle=f(z)\langle x,D_z(g_0)\rangle.
$$
By Remark 16~(a) for all $z\in\mathbb C$
$$
g_0(z)\omega_f(z,x)=f(z)\omega_{g_0}(z,x).
$$
Since $g_0$ and $f$ do not have common zeros, $\omega_{g_0}(z,x)/g_0(z)$
is entire function depending on $z$. By Theorem 15, $x=0$. 
Consequently, the system
$\{\tilde T_z(f)\,:\,z\in\mathbb C\}$ is complete in $E_Q$.
By Theorem 9, $f\in {\rm Cycl}(D_{0,g_0})$.

%%%%%%%%%%%%%%%%%%%%%%%%%%%%%%%%%%%%%%%%%%%%%%%%%%%%%%%%%%%%%%%%%%%%%%%%%%%%%%%%%%%%%%
\medskip
(II) $(iii)\Rightarrow(i)$: Consider all possible situations.

Suppose that $f$ has infinitely many zeros. 
Proceeding as in the proof of the implication 
$(ii)\Rightarrow(i)$, we obtain that 
$f\in {\rm Cycl}(D_{0,g_0})$.

Suppose now that $f$ has no zeros or $f$ has finite many zeros. Then
$f=Se_\mu$ for some $\mu\in Q$ such that $\mu\neq\lambda$ 
and for some
polynomial $S$. We fix $n\in\mathbb N$ such that
$\lambda, \mu\in Q_n$.

Let at first $S\equiv{\rm const}=C_0\neq 0$. Without loss of 
generality $C_0=1$.
Let
$P(z)=\sum\limits_{j=0}^m a_jz^j$ where $m\in\mathbb N$
and $a_0=1$.
We fix $x\in H(Q)$ with $\langle x,\tilde T_z(f)\rangle = 0$ 
for all $z\in\mathbb C$.

By Lemma 14 and Remark 16~(a) for all $z\in\mathbb C$
$$
g_0(z)\omega_f(z,x)=f(z)\omega_{g_0}(z,x).
$$

Let $m=1$.
By Remark 16~(d)
$$
\omega_f(z,x)=e^{\mu z}Y(\mu,z)
$$
where $Y(t,z):=\int\limits_0^t x(\eta)e^{-z\eta}d\eta$. Hence
\begin{equation}
g_0(z)Y(\mu, z)=\omega_{g_0}(z,x), \,\,z\in\mathbb C.
\end{equation}
By Remark 16~(b) for $K:=Q_n$, for some curve $C$ as in 3.1
$$
\omega_{g_0}(z,x)=e^{z\lambda}Y(\lambda,z) + a_1\frac{1}{2\pi i}\int\limits_C
\frac{1}{(t-\lambda)^2}\left(\int\limits_0^tx(t-\xi)e^{z\xi}d\xi\right)dt.
$$
Since 
$$
\int\limits_0^tx(t-\xi)e^{z\xi}d\xi=e^{zt}\int\limits_0^tx(\eta)e^{-z\eta}d\eta
$$
we have by the integral Cauchy formula
$$
\omega_{g_0}(z,x)=e^{z\lambda}Y(\lambda,z) + 
a_1\frac{d}{dt}\left(e^{zt}\int\limits_0^tx(\eta)e^{-z\eta}
d\eta\right)\Big|_{t=\lambda}=
$$
$$
e^{z\lambda}Y(\lambda,z)+a_1(ze^{\lambda z} Y(\lambda,z)+x(\lambda)).
$$
Thus by (4)
$$
e^{\lambda z}(1+a_1z)Y(\mu,z)=e^{z\lambda}Y(\lambda,z)+
a_1(ze^{\lambda z} Y(\lambda,z)+x(\lambda)).
$$
From here it follows that for all $z\in\mathbb C$
$$
e^{\lambda z}(1+a_1z)\int\limits_\lambda^\mu x(\eta)e^{-z\eta}
d\eta=a_1x(\lambda).
$$
By Lemma 18, $x=0$ on $[\lambda,\mu]$ and consequently $x=0$ 
as an element of $H(Q)$.

Let now $m\ge 2$. Then by Remark 16~(b) 
and by the integral Cauchy formula
$$
\omega_{g_0}(z,x)=e^{\lambda z}Y(\lambda,z)+
\sum\limits_{j=1}^ma_j\frac{j!}{2\pi i}
\int\limits_C
\frac{dt}{(t-\lambda)^{j+1}}\int\limits_0^tx(t-\xi)e^{z\xi}d\xi=
$$
$$
e^{\lambda z}Y(\lambda,z)+ \sum\limits_{j=1}^ma_j\frac{d^j}{dt^j}
\left(e^{zt}\int\limits_0^tx(\eta)e^{-z\eta}
d\eta\right)\Big|_{t=\lambda}=
$$
$$
e^{\lambda z}Y(\lambda,z)+ a_1(ze^{\lambda z}Y(\lambda,z)+x(\lambda))+
\sum\limits_{j=2}^m a_j\frac{d^j}{dt^j}
\left(e^{zt}\int\limits_0^tx(\eta)e^{-z\eta}
d\eta\right)\Big|_{t=\lambda}.
$$
For $j\ge 2$
$$
\frac{d^j}{dt^j}
\left(e^{zt}\int\limits_0^t x(\eta)e^{-z\eta}
d\eta\right)=\sum\limits_{s=0}^j C_j^sz^{j-s}e^{zt}\frac{d^s}{dt^s}\left(Y(t,z)\right)=
z^je^{zt}Y(t,z)+
$$
$$
C_j^1z^{j-1}e^{zt}x(t)e^{-zt}+
\sum\limits_{s=2}^jC_j^sz^{j-s}e^{zt}\sum\limits_{r=0}^{s-1}C_{s-1}^r (-1)^rz^r
e^{-zt}x^{(s-1-r)}(t).
$$
Consequently,
$$
\omega_{g_0}(z,x)=e^{\lambda z}Y(\lambda,z)+a_1(ze^{\lambda z} Y(\lambda,z)+x(\lambda))+
$$
$$
\sum\limits_{j=2}^m a_j\left(z^je^{\lambda z}Y(\lambda,z)
+ 
\right.
$$
$$
\left. 
C_j^1z^{j-1}x(\lambda)
+\sum\limits_{s=2}^jC_j^sz^{j-s}e^{\lambda z}
\sum\limits_{r=0}^{s-1}C_{s-1}^r(-1)^rz^r
e^{-\lambda z}x^{(s-1-r)}(\lambda)\right)=
$$
$$
e^{\lambda z}Y(\lambda,z)+a_1ze^{\lambda z} Y(\lambda,z)+ 
\sum\limits_{j=2}^m a_jz^je^{\lambda z}Y(\lambda,z)+W(z)=
$$
$$
e^{\lambda z}Y(\lambda,z)g_0(z) + W(z)
$$
where $W$ is a polynomial depending on $z$. It has the form
$$
W(z)=\sum\limits_{p=0}^{m-1} w_p(z)x^{(p)}(\lambda)
$$ 
where
polynomials $w_p$ do not depend on $x\in H(Q)$.
Hence by (4) for all $z\in\mathbb C$
$$
e^{\lambda z}\left(1+
\sum\limits_{j=1}^ma_jz^j\right)\int\limits_\lambda^\mu x(\eta)e^{-z\eta}d\eta=
W(z).
$$ %(a_1+\sum\limits_{j=2}^ma_jz^j)
From here it follows that the entire function
$e^{\lambda z}\int\limits_\lambda^\mu x(\eta)e^{-z\eta}d\eta$ 
depending on $z$
is a polynomial.
By Lemma 18 $x=0$.
Thus the system $\{\tilde T_z\,:\,z\in\mathbb C\}$ is complete in $E_Q$ and
by Theorem 9 $f\in {\rm Cycl}(D_{0,g_0})$.

Finally, for arbitrary polynomial $S$ which does not have common zeros with $P$ 
the assertion follows from Corollary 12~(i).

\medskip
$(i)\Rightarrow(iii)$: From Remark 11 it follows that
$f$ and $g_0$ do not have common zeros.
We assume that there exists
a polynomial $R$ of degree at most $s\in\mathbb N$ 
such that $f=Re_\lambda$. Let
$P(z):=\sum\limits_{j=0}^ma_j z^j$, $m\in\mathbb N$.
As in the proof of the implication
$(iii)\Rightarrow(i)$ we conclude 
that there are polynomials $w_p$, $v_q$ such that
for all $x\in H(Q)$
$$
\omega_{g_0}(z,x)=P(z)e^{\lambda z}Y(\lambda,z) + W(z)
$$
and
$$
\omega_f(z,x)=R(z)e^{\lambda z} Y(\lambda,z) + V(z)
$$
where
$W(z)=\sum\limits_{p=0}^{m-1}x^{(p)}(\lambda) w_p(z)$
and
$V(z)=\sum\limits_{q=0}^{s-1}x^{(q)}(\lambda)v_q(z)$.
There exists a nonzero function $x\in H(Q)$ such that
$x^{(r)}(\lambda)=0$ if $0\le r\le{\rm max}(m,s)$.
Then
$W=V=0$ and for all $z\in\mathbb C$
$$
P(z)e^{\lambda z}(R(z)e^{\lambda z} Y(\lambda,z) + V(z))=
R(z)e^{\lambda z}(P(z)e^{\lambda z}Y(\lambda,z)+W(z)),
$$
that is, $g_0(z)\omega_f(z,x)=f(z)\omega_{g_0}(z,x)$
for all $z\in\mathbb C$. Consequently, by Remark 16~(a)
$$
g_0(z)\langle x,D_z(f)\rangle=f(z)\langle x,D_z(g_0)\rangle
$$
and, by Lemma 14, $\langle x,\tilde T_z(f)\rangle=0$ 
for all $z\in\mathbb C$. Hence the system
$\{\tilde T_z(f)\,:\,z\in\mathbb C\}$ is 
incomplete in $E$ and by Theorem 9
$f\notin{\rm Cycl}(D_{0,g_0})$. A contradiction.
\end{proof}

%%%%%%%%%%%%%%%%%%%%%%%%%%%%%%%%%%%%%%%%%%%%%%%%%%%%%%%%%%%%%%%%%%%%%%%%%%%%%%%%

\subsection{Invariant subspaces of the Pommiez operator 
in $E_Q$}

\medskip
We apply Theorem 19 to a description of proper
closed $D_{0,g_0}$-invariant
subspaces of $E_Q$ where $g_0$ has no zeros.

Denote by $\mathbb C[z]$ (resp. $\mathbb C[z]_n$ for integer $n\ge 0$)
the space of all
polynomials (resp. of degree at most $n$). 
Put for integer $n\ge 0$ and $\lambda\in\mathbb C$ 
$$
{\mathcal P}_n(e_\lambda):= e_\lambda\cdot\mathbb C[z]_n:=
\{e_\lambda P\,:\, P\in C[z]_n\}.
$$

\medskip
\begin{sled} Let $g_0=e_\lambda$ for some 
$\lambda\in Q$.

\begin{itemize}
\item[(i)] For each integer 
$n\ge 0$ the space ${\mathcal P}_n(e_\lambda)$ is a proper closed
$D_{0,g_0}$-invariant subspace of $E_Q$.
\item[(ii)] For each proper closed
$D_{0,g_0}$-invariant subspace $\mathcal P$ of $E_Q$ there is $n\ge 0$ such that
$\mathcal P={\mathcal P}_n(e_\lambda)$.
\end{itemize}
\end{sled}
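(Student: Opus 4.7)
For part (i), the plan is to verify the three properties directly. Invariance follows from a one-line computation: for any polynomial $P$ of degree at most $n$,
$D_{0,g_0}(Pe_\lambda)(t)=e^{\lambda t}(P(t)-P(0))/t$,
which lies in $\mathcal{P}_{n-1}(e_\lambda)\subseteq \mathcal{P}_n(e_\lambda)$. Since $\mathcal{P}_n(e_\lambda)$ is $(n+1)$-dimensional it is automatically closed in the Hausdorff locally convex space $E_Q$; and since the uncountable family $\{e_\mu:\mu\in Q\}\subset E_Q$ from Remark~17(c) is linearly independent, $E_Q$ is infinite-dimensional, so $\mathcal{P}_n(e_\lambda)$ is proper.

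For part (ii), let $\mathcal{P}\neq\{0\}$ be a proper closed $D_{0,g_0}$-invariant subspace. First I would observe that $\mathcal{P}$ cannot contain any cyclic vector: otherwise $\mathrm{span}\{D_{0,g_0}^k(f):k\ge 0\}\subseteq\mathcal{P}$, and passing to the closure forces $\mathcal{P}=E_Q$. Since $g_0=e_\lambda$ has no zeros, the common-zeros clause in Theorem~19(II) is vacuous, so the non-cyclic elements of $E_Q$ are exactly those of $e_\lambda\cdot\mathbb{C}[z]$; hence $\mathcal{P}\subseteq e_\lambda\cdot\mathbb{C}[z]$. Next, if $Pe_\lambda\in\mathcal{P}$ with $\deg P=m$, iterating the identity above gives $D_{0,g_0}^k(Pe_\lambda)=e_\lambda P_k$ where each $P_k$ has exact degree $m-k$ (the leading coefficient is preserved at every step); by $D_{0,g_0}$-invariance and linear independence of polynomials of distinct degrees, $\mathcal{P}_m(e_\lambda)\subseteq\mathcal{P}$.

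Setting $N:=\sup\{\deg P : Pe_\lambda\in \mathcal{P}\}$, the previous step forces $\mathcal{P}=\mathcal{P}_N(e_\lambda)$ when $N<\infty$, so it suffices to rule out $N=\infty$. For this I would establish that $e_\lambda\cdot\mathbb{C}[z]$ is dense in $E_Q$: then $N=\infty$ would yield $\mathcal{P}\supseteq e_\lambda\cdot\mathbb{C}[z]$ and, by closedness of $\mathcal{P}$, the contradiction $\mathcal{P}=E_Q$. The density reduces, via the Laplace-transform identification of $E_Q'$ with $H(Q)$ that is used implicitly in the proof of Theorem~19, to the claim that any $x\in H(Q)$ satisfying $\langle x,z^ne_\lambda\rangle=x^{(n)}(\lambda)=0$ for all $n\ge 0$ must vanish; this follows from the identity theorem applied on a connected open neighborhood of the convex set $Q\ni\lambda$. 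This density step is the main obstacle of the argument; everything else is routine given the preceding machinery.
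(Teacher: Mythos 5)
Your proof is correct, and its skeleton coincides with the paper's: both reduce part (ii) via Theorem 19 to the inclusion $\mathcal P\subseteq e_\lambda\cdot\mathbb C[z]$, both use the degree-lowering identity $D_{0,e_\lambda}(Pe_\lambda)=e_\lambda\,(P(\cdot)-P(0))/(\cdot)$ to show that $\mathcal P_m(e_\lambda)\subseteq\mathcal P$ whenever a polynomial of degree $m$ is attained, and both rule out unbounded degrees by the density of $e_\lambda\cdot\mathbb C[z]$ in $E_Q$. The one genuine divergence is in that density step, which you correctly identify as the crux. The paper proves it by invoking Krasichkov-Ternovskii's approximation theorem \cite[Theorem 4.4]{KRASTERN}, which gives polynomials $R_j$ with $R_je_\lambda\to h$ in a fixed step $E_q$ of the inductive limit. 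You instead argue by duality: since $E_Q'$ is identified with $H(Q)$ and $\langle x,z^ne_\lambda\rangle=x^{(n)}(\lambda)$, an annihilating functional corresponds to a germ $x$ with all derivatives vanishing at $\lambda$, which is zero by the identity theorem on the connected component (containing the connected set $Q$) of any open neighborhood on which $x$ is holomorphic. Your route is more self-contained and elementary, trading an external citation for Hahn--Banach plus the dual description already set up for Theorem 19; the paper's route gives slightly more (approximation within a single Fr\'echet step $E_q$, not just density in the (LF)-topology), but that extra strength is not needed here. One cosmetic point: as stated, part (ii) tacitly excludes $\mathcal P=\{0\}$ (which is not of the form $\mathcal P_n(e_\lambda)$); you make this explicit, the paper leaves it implicit, and neither is a real gap.
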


\begin{proof} (i): Each space $\mathcal P_n(e_\lambda)$ is finite-dimensional and hence
closed. Clearly $\mathcal P_n(e_\lambda)\ne E_Q$ and
$\mathcal P_n(e_\lambda)$ is $D_{0,g_0}$-invariant.

(ii):  Let $\mathcal P$ be a proper closed $D_{0,g_0}$-invariant
subspace of $E_Q$. By Theorem 19 $\mathcal P\subset e_\lambda\cdot\mathbb C[z]$.
Show that
$$
n(\mathcal P):={\rm sup}\{{\rm deg}(P)\,:\,Pe_\lambda\in\mathcal P\}<+\infty.
$$ 
We assume that $n(\mathcal P)=+\infty$.
Note that for each polynom $S$ of degree $m\in\mathbb N$, 
for the function $h=Se_\lambda$ 
the function $D_{0,g_0}(h)$	has the form $S_1e_\lambda$ 
where $S_1$ is a polynomial 
of degree $m-1$. Hence for each $m\ge 0$ there exists a polynomial $R_m$ such that
${\rm deg}(R_m)=m$ and $R_m e_\lambda\in\mathcal P$. 
From here it follows that
$e_\lambda\cdot\mathbb C[z]\subset\mathcal P$.

Let $\lambda\in Q_n$. We take $p\in\mathbb N$ and $h\in E_p$.
By \cite[Theorem 4.4]{KRASTERN} there are polynomials
$R_j$, $j\in\mathbb N$, such that $R_je_\lambda\to h$ in $E_q$
where $q={\rm max}(n, p)$.
Hence the subspace $e_\lambda\cdot\mathbb C[z]$ is dense
in $E_Q$. Consequently, $\mathcal P=E_Q$. A contradiction.												
Thus $n(\mathcal P)<+\infty$ and 
$\mathcal P=\mathcal P_{n(\mathcal P)}(e_\lambda)$.
\end{proof}

\subsection{Ideals in the algebra $(H(Q), \ast)$}

Let $E$  be the space as in \S~1. 
It is possible to connect 
with the operator $D_{0,g_0}$ a multiplication
in $E'$. By \cite{AA} for all $\varphi, \psi\in E'$ the multiplication 
(convolution) $\varphi\otimes\psi$ is defined by
$$
\varphi\otimes\psi(f):=\varphi_z(\psi(T_z(f))), \,\, f\in E.
$$
The operation $\otimes$ is associative and commutative.
The vector space $E'$ with this multiplication is an algebra. 
For a set $T\subset E$ (resp. $T\subset E'$)
we denote by $T^0$ the polar of $T$ in
$E'$ (resp. in $E$). We endow $E'$ with the Mackey
topology $\tau(E',E)$. The following 
{\it duality principle} holds.

\begin{lem} The following assertions are equivalent:

\begin{itemize}
\item[(i)] A set $L\subset E'$ is a proper closed ideal
in $(E',\otimes)$.
\item[(ii)]There exists a proper closed $D_{0,g_0}$-invariant
subspace $\mathcal P$ 
of $E$ such that $L={\mathcal P}^0$.
\end{itemize}
\end{lem}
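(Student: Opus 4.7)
The plan is to prove the equivalence via the standard polar duality for Mackey-compatible topologies, after translating the algebraic notion of ideal into an invariance condition for certain operators on $E$. First, by the bipolar theorem, since $\tau(E',E)$ is the Mackey topology, the polar map $\mathcal{P}\mapsto \mathcal{P}^0$ is a bijection between closed linear subspaces of $E$ and $\tau$-closed linear subspaces of $E'$, with inverse $L\mapsto L^0$; under this correspondence properness is preserved on both sides, since $L=E'$ pairs with $\mathcal{P}=\{0\}$ and $L=\{0\}$ with $\mathcal{P}=E$. Next, for $\psi\in E'$ set $B_\psi(f)(z):=\psi(T_z(f))$; by Theorem 5 the assignment $\psi\mapsto B_\psi$ is a bijection of $E'$ onto $\mathcal{K}(D_{0,g_0})$, and by Lemma 1 one has $D_{0,g_0}^n=B_{\varphi_n}$. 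Unwinding the definition of $\otimes$ gives $(\varphi\otimes\psi)(f)=\varphi(B_\psi(f))$, so $\varphi\otimes\psi=B_\psi^t(\varphi)$, where $B_\psi^t$ denotes the transpose of $B_\psi$. Consequently, a linear subspace $L\subseteq E'$ is an ideal in $(E',\otimes)$ if and only if $B_\psi^t(L)\subseteq L$ for every $\psi\in E'$, and the standard polar-transpose compatibility ($A(\mathcal{P})\subseteq \mathcal{P}$ iff $A^t(\mathcal{P}^0)\subseteq \mathcal{P}^0$) translates this into: a $\tau$-closed subspace $L$ is an ideal if and only if $\mathcal{P}:=L^0$ is invariant under every $B_\psi$, $\psi\in E'$.

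It remains to show that, for a closed subspace $\mathcal{P}\subseteq E$, invariance under every $B_\psi$, $\psi\in E'$, is equivalent to ordinary $D_{0,g_0}$-invariance. One direction is immediate since $D_{0,g_0}=B_{\varphi_1}$. For the converse, fix $f\in \mathcal{P}$ and $\psi\in E'$. By Remark 2(b) the system $\{\varphi_n:n\ge 0\}$ is complete in $(E',\tau)$, so there is a net $\Phi_\alpha=\sum_{j=0}^{m_\alpha}a_{j\alpha}\varphi_j$ converging to $\psi$ in $(E',\tau)$. Then $B_{\Phi_\alpha}(f)=\sum_j a_{j\alpha}D_{0,g_0}^j(f)\in \mathcal{P}$ by $D_{0,g_0}$-invariance and linearity, while Lemma 3 yields $B_{\Phi_\alpha}(f)=(\Phi_\alpha)_t(\widetilde f(t,\cdot))\to \psi_t(\widetilde f(t,\cdot))=B_\psi(f)$ in $E$. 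Closedness of $\mathcal{P}$ then forces $B_\psi(f)\in \mathcal{P}$, finishing the argument.

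The main obstacle is this last approximation step: Mackey-topology convergence of functionals has to be promoted to $E$-topology convergence of the associated operators evaluated at a fixed $f$. The combined use of the completeness of $\{\varphi_n\}$ in $(E',\tau)$ together with the Mackey-to-$E$ continuity supplied by Lemma 3 is precisely what delivers this; the remaining ingredients (the bipolar theorem, polar-transpose compatibility, and the algebraic reformulation of ideals as $B_\psi^t$-invariance) are routine once the correspondence $\psi\leftrightarrow B_\psi$ is in place.
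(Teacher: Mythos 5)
Your proof is correct, and it reaches the same destination by a route that is organizationally and technically a little different from the paper's. The paper proves the two implications separately: for $(i)\Rightarrow(ii)$ it checks directly that $L^0$ is $D_{0,g_0}$-invariant using $D_{0,g_0}=B_{\varphi_1}$ and then invokes the bipolar theorem (as you do); for $(ii)\Rightarrow(i)$ it fixes $f\in\mathcal P$ and computes $\varphi\otimes\psi(f)=\varphi_z(\psi(T_z(f)))=0$ by citing, from the proof of Theorem 14 in \cite{AA}, a net of operators $\sum_j b_{j,\alpha,z}D_{0,g_0}^j$ converging to $T_z$ pointwise at $f$ for each fixed $z$. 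You instead package the whole statement as a single equivalence via the transpose correspondence $\varphi\otimes\psi=B_\psi^t(\varphi)$ and the polar--transpose compatibility, reducing everything to the claim that a closed $D_{0,g_0}$-invariant subspace is invariant under every $B_\psi$; and you establish that claim by approximating $\psi$ itself by finite combinations $\sum_j a_{j\alpha}\varphi_j$ (Remark 2(b)) and converting Mackey convergence of functionals into convergence of $B_{\Phi_\alpha}(f)\to B_\psi(f)$ in $E$ via Lemma 3. The net effect is a proof that is entirely self-contained within the lemmas stated in this paper (Lemma 1, Remark 2(b), Lemma 3, Theorem 5), whereas the paper's version leans on an external reference for the key approximation; on the other hand, the paper's direct computation of $\varphi\otimes\psi(f)$ avoids the need to formulate the operator-invariance reformulation explicitly. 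Both arguments use the same underlying facts, and yours is complete as written.
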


\begin{proof} $(i)\Rightarrow(ii)$:
Let $L$ be a proper closed ideal in $(E',\otimes)$.
Fix $f\in L^0$. 
Then $\psi\otimes\varphi(f)=0$ for all
$\psi\in L$ and $\varphi\in E'$.
By Lemma 1 $D_{0,g_0}(f)(z)=\varphi_1(T_z(f))$ for all 
$z\in\mathbb C$.
Hence for all $\psi\in L$
$$
\psi(D_{0,g_0}(f))=\psi_z(\varphi_1(T_z(f)))=
\psi\otimes\varphi_1(f)=0.
$$
Consequently, $D_{0,g_0}(f)\in L^0$,
and $\mathcal P:=L^0$ is a proper closed
$D_{0,g_0}$-invariant subspace of $E$.
Since $L$ is a closed subspace of $E'$, by bipolar theorem 
\cite[Ch.~III, 22.13]{MEIVOGT}
$L=L^{00}={\mathcal P}^0$.

$(ii)\Rightarrow(i)$: Let $L={\mathcal P}^0$ for some proper
closed $D_{0,g_0}$-invariant subspace $\mathcal P$ of $E$.
Fix $\varphi\in E'$, $\psi\in L$. By 
\cite[the proof of Theorem 14]{AA} for all $z\in\mathbb C$
there exists a net
$B_{\alpha,z}=\sum\limits_{j=0}^{n_{\alpha,z}}b_{j,\alpha, z}D_{0,g_0}^j$,  
$\alpha\in\Lambda_z$, such that
$\lim\limits_{\alpha\in\Lambda_z}B_{\alpha,z}(f)=T_z(f)$ in $E$.
Then for all $f\in\mathcal P$
$$
\varphi\otimes\psi(f)=
\varphi_z(\psi(T_z(f)))=\varphi_z\left(\psi\left(
\lim\limits_{\alpha\in\Lambda_z}
\sum\limits_{j=0}^{n_{\alpha,z}}b_{j,\alpha, z}D_{0,g_0}^j(f)\right)\right)=
$$
$$
\varphi_z\left(
\lim\limits_{\alpha\in\Lambda_z}
\sum\limits_{j=0}^{n_{\alpha,z}}b_{j,\alpha, z}
\psi\left(D_{0,g_0}^j(f)\right)\right)=0
$$
since $\psi(D_{0,g_0}^j(f))=0$ for all $j\ge 0$.
Consequently, $\varphi\otimes\psi\in {\mathcal P}^0$,
and $L$ is a proper closed ideal in $(E',\otimes)$.

\end{proof}

%%%%%%%%%%%%%%%%%%%%%%%%%%%%%%%%%%%%%%%%%%%%%%%%%%%%%%%%%%%%%%%%%%%%%%%%%%%%%%%%%%%%%%

\medskip

We return to the space $E_Q$. By \cite[Proposition 1.9]{MART}
the projective topology in $H(Q)$ coincides with the inductive topology
of ${\rm ind}_\Omega H(\Omega)$ where $\Omega$ runs over all open 
neighborhoods $\Omega$
of $Q$ and where $H(\Omega)$ is the Fr\'echet space of all holomorphic 
functions on $\Omega$. By \cite[\S~3, p.~65]{MART} $H(Q)$ is a Montel space.
Hence \cite[Remark 24.24~(a)]{MEIVOGT} the space $H(Q)$ is reflexive,
and $E'_Q$ can be identified 
with the space $H(Q)$. In addition, the transform 
$$
\mathcal J: E'_Q\to H(Q), \,\, \varphi \mapsto \varphi(e_z), 
$$
is a topological isomorphism of the strong dual of $E_Q$
onto $H(Q)$. Put $\widehat{\varphi}:={\mathcal J}(\varphi)$, \, 
$\varphi\in E'_Q$.

From \cite[Example 3.2]{AA} it follows that for $g_0\equiv 1$
the multiplication $\otimes$ in $E'_Q$ is realized in $H(Q)$ 
in the following way. 
For all  $\varphi,\psi\in E'_Q$
$$
\mathcal J(\varphi\otimes\psi)(z)=
\widehat \psi(0)\widehat \varphi(z)+\int\limits_0^z
\widehat \varphi(\xi)(\widehat \psi) '(z-\xi)d\xi
$$
where the integral is taken along the 
segment $[0,z]$, and $z$ belongs to the union
of some convex domains $G_n$, $n\in\mathbb N$,
such that $Q_n\subset G_n$ and functions $\widehat{\varphi}$,
$\widehat{\psi}$ are holomorphic on each $G_n$.
Hence for $g_0\equiv 1$ the operation $\otimes$ 
is realized as 
the Duhamel product $\ast$ in $H(Q)$:
%%%%%%%%%%%%%%%%%%%%%%%%%%%%%%%%%%%%%%%%%%%%%%%%%%%%%%%%%%%%%%%%%%%%%%%%%%%%%%%%
$$
v\ast w(z)=w(0)v(z)+\int\limits_0^z
v(\xi)w'(z-\xi)d\xi, \,\, v, w\in H(Q).
$$

The space $H(Q)$ is an algebra with the multiplication $\ast$.
We will describe proper closed ideals in $(H(Q),\ast)$. 
We note at first that for all $\lambda\in Q$, 
$h\in H(Q)$ and integer $n\ge 0$
the following equality holds:
$$
{\mathcal J}^{-1}(h)_z(z^ne^{\lambda z})=h^{(n)}(\lambda).
$$

From Corollary 20 and Lemma  21 it follows

\begin{sled} The following assertions are equivalent:
\begin{itemize}
\item[(i)] $T$ is a proper closed ideal in $(H(Q),\ast)$.
\item[(ii)] There exists an integer $n\ge 0$ such that
$$
T=\{h\in H(Q) \, | \,
h^{(j)}(0)=0, \,\, 0\le j\le n\}.
$$
\end{itemize}

\end{sled}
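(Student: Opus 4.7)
The plan is to assemble Corollary 22 from three ingredients already at hand: the duality principle (Lemma 21), the classification of proper closed $D_{0,g_0}$-invariant subspaces when $g_0$ has no zeros (Corollary 20), and the identification of $\otimes$ with the Duhamel product $\ast$ under $\mathcal J$ (which is stated just before Corollary 22). Throughout I take $g_0 \equiv 1 = e_0$, which is in $E_Q$ since $0 \in Q$.

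First I would translate the question to $E_Q$ via $\mathcal J$. Under the topological isomorphism $\mathcal J : E'_Q \to H(Q)$, the algebra $(E'_Q, \otimes)$ is mapped isomorphically onto $(H(Q), \ast)$. Therefore $T \subset H(Q)$ is a proper closed ideal in $(H(Q), \ast)$ if and only if $L := \mathcal J^{-1}(T) \subset E'_Q$ is a proper closed ideal in $(E'_Q, \otimes)$. By Lemma 21 applied to $E = E_Q$ and $g_0 \equiv 1$, this in turn holds if and only if $L = \mathcal P^0$ for some proper closed $D_0$-invariant subspace $\mathcal P$ of $E_Q$.

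Next I would invoke Corollary 20 with $\lambda = 0$ (so that $e_\lambda \equiv 1$ and $g_0 = e_\lambda$ satisfies the hypothesis of Corollary 20 and has no zeros). Corollary 20 asserts that the proper closed $D_0$-invariant subspaces of $E_Q$ are exactly the finite-dimensional spaces $\mathcal P_n(e_0) = \mathbb C[z]_n$ for integer $n \ge 0$. So $T$ corresponds under these bijections to some $\mathcal P_n(e_0) = \mathbb C[z]_n$.

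Finally I would compute the polar explicitly to show $T = \mathcal J(\mathbb C[z]_n^{0})$ has the claimed form. Using the formula recorded just before the statement,
\[
\mathcal J^{-1}(h)_z(z^n e^{\lambda z}) = h^{(n)}(\lambda),
\]
specialized to $\lambda = 0$, we get $\varphi(z^j) = h^{(j)}(0)$ where $h = \mathcal J(\varphi)$. Thus $\varphi$ annihilates $\mathbb C[z]_n$ if and only if $h^{(j)}(0) = 0$ for all $0 \le j \le n$, which is exactly the description in (ii). Conversely, any such $T$ is the image under $\mathcal J$ of the polar of $\mathbb C[z]_n$, hence is a proper closed ideal by the equivalences above. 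I don't foresee a significant obstacle — the entire argument is a transport of Corollary 20 through the duality and the identification $\otimes \leftrightarrow \ast$; the only calculation is the one-line polar computation via the evaluation formula at $\lambda = 0$.
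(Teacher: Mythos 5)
Your proof is correct and follows exactly the route the paper intends: the paper derives this corollary directly from Lemma 21 (the duality principle), Corollary 20 with $g_0=e_0\equiv 1$, the identification of $\otimes$ with the Duhamel product $\ast$ under $\mathcal J$, and the evaluation formula ${\mathcal J}^{-1}(h)(z^n)=h^{(n)}(0)$. Your polar computation fills in the only detail the paper leaves implicit.
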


\medskip
{\bf Acknowledgement}.
The authors would like to express gratitude to Professor
J.~Bonet for fruitful discussions and valuable comments.

\bigskip

\end{document}